\newtheorem{theorem}{Theorem}
\newtheorem{definition}{Definition}
\newtheorem{proposition}{Proposition}
\newtheorem{lemma}{Lemma}
\newtheorem{corollary}{Corollary}
\newtheorem{assumption}{Assumption}
\newtheorem{standassumption}{Standing Assumption}
\newcommand{\RR}{{\mathbb{R}}}
\newcommand{\mc}{\mathcal}
\newcommand{\op}{\operatorname}
\newcommand{\prox}{\operatorname{prox}}
\newcommand{\bs}{\boldsymbol}
\newcommand{\fineass}{\hfill\small$\blacksquare$}
\newcommand{\scrX}{\mathcal{X}}
\newcommand{\setH}{\mathsf{H}}
\DeclarePairedDelimiter{\abs}{\lvert}{\rvert}
\DeclarePairedDelimiter{\inner}{\langle}{\rangle}
\DeclarePairedDelimiter{\norm}{\lVert}{\rVert}
\DeclareMathOperator{\Zer}{zer}
\DeclareMathOperator{\Id}{Id}
\DeclareMathOperator{\NC}{\mathsf{N}}
\DeclareMathOperator{\diag}{diag}
\DeclareMathOperator{\Span}{span}
\DeclareMathOperator{\gr}{gph} 
\DeclareMathOperator{\dom}{dom}
\newcommand{\R}{\mathbb{R}}
\newcommand{\0}{\mathbf{0}}
\newcommand{\1}{\mathbf{1}}
\newcommand{\bW}{\mathbf{W}}
\newcommand{\bL}{\mathbf{L}}
\newcommand{\bA}{{\mathbf A}}
\title{\LARGE \bf
Distributed forward-backward (half) forward algorithms for generalized Nash equilibrium seeking
}
\author{Barbara Franci$^{1}$, Mathias Staudigl$^{2}$ and Sergio Grammatico$^{1}$
\thanks{$^{1}$Barbara Franci and Sergio Grammatico are with the Delft Center for System and Control, TU Delft, The Netherlands
        {\tt\small \{b.franci-1, s.grammatico\}@tudelft.nl}.}%
\thanks{$^{2}$Mathias Staudigl is with the Department of Data Science and Knowledge Engineering, Maastricht University, The Netherlands
{\tt\small m.staudigl@maastrichtuniversity.nl}.}%
\thanks{This work was partially supported by NWO under research projects OMEGA (613.001.702) and P2P-TALES (647.003.003), and by the ERC under research project COSMOS (802348). M. Staudigl acknowledges support from the COST Action CA16228 "European Network form Game Theory".}}
\begin{document}

\maketitle
\thispagestyle{empty}
\pagestyle{empty}

\begin{abstract}
We present two distributed algorithms for the computation of a generalized Nash equilibrium in monotone games. The first algorithm follows from a forward-backward-forward operator splitting, while the second, which requires the pseudo-gradient mapping of the game to be cocoercive, follows from the forward-backward-half-forward operator splitting. Finally, we compare them with the distributed, preconditioned, forward-backward algorithm via numerical experiments. 
\end{abstract}

\section{Introduction}

Generalized Nash equilibrium problems (GNEPs) have been widely studied in the literature \cite{Ros65, facchinei2007, facchinei2010} and such a strong interest is motivated by numerous applications ranging from economics to engineering \cite{pavel2007,kulkarni2012}. In a GNEP, each agent seeks to minimize his own cost function, under local and coupled feasibility constraints. In fact, both the cost function and the constraints depend on the strategies chosen by the other agents. Due to the presence of these shared constraints, the search for generalized Nash equilibria is usually a quite challenging task. 

For the computation of a GNE, various algorithms have been proposed, both distributed \cite{belgioioso2018,yi2019}, and semi-decentralized \cite{facchinei2010,belgioioso2017}. When dealing with coupling constraints, a common principle is the focus on a special class of equilibria, which reflect some notion of fairness among the agents. This class is known as \emph{variational equilibria} (v-GNE) \cite{Deb52,facchinei2010}. Besides fairness considerations, v-GNE is computationally attractive since it can be formulated in terms of variational inequality, which makes it possible to solve them via operator splitting techniques \cite{BauCom16,facchinei2007}. A recent breakthrough along these lines is the distributed, preconditioned, forward-backward (FB) algorithm conceived in \cite{yi2019} for strongly-monotone games. The key lesson from \cite{yi2019} is that the FB method cannot be directly applied to GNEPs, thus a suitable preconditioning is necessary. From a technical perspective, the FB operator splitting requires that the pseudo-gradient mapping of the game is strongly monotone, an assumption which is not always satisfied.

In this paper we investigate two distributed algorithmic schemes for computing a v-GNE. Motivated by the need to relax the strong monotonicity assumption on the pseudo-gradient of the game, we first investigate a distributed \textit{forward-backward-forward} (FBF) algorithm \cite{tseng2000}. We show that a suitably constructed FBF operator splitting guarantees not only fully distributed computation, but also convergence to a v-GNE under the mere assumption of monotonicity of the involved operators. This enables us to drop the strong monotonicity assumption, which is the main advantage with respect to the FB-based splitting methods \cite{yi2019}. 
As a second condition, in order to exploit the structure of the monotone inclusion defining the v-GNE problem, we also investigate the \textit{forward-backward-half-forward} (FBHF) algorithm \cite{briceno2018}. We would like to point out that both our algorithms are distributed in the sense that each agent needs to know only his local cost function and its local feasible set, and there is no central coordinator that updates and broadcasts the dual variables. The latter is the main difference with semi-decentralized schemes for aggregative games \cite{Gram17,belgioioso2017}.


Compared with the FB and the FBHF algorithms, the FBF requires less restrictive assumptions to guarantee convergence, i.e., plain monotonicity of the pseudo-gradient mapping. On the other hand, the FBF algorithm requires two evaluations of the pseudo-gradient mapping, which means that the agents must communicate at least twice at each iterative step. Confronted with the FBF algorithm, our second proposal, the FBHF algorithm requires only one evaluation of the pseudo-gradient mapping, but needs strong monotonicity to provide theoretical convergence guarantees. Effectively, the FBHF algorithm is guaranteed to converge under the same assumptions as the preconditioned FB \cite{yi2019}. 

\paragraph*{Notation}
$\RR$ indicates the set of real numbers and $\bar\RR=\RR\cup\{+\infty\}$. $\0_N$ ($\1_N$) is the vector of N zeros (ones). The Euclidean inner product and norm are indicated with $\inner{\cdot,\cdot}$ and $\norm{\cdot}$, respectively.
Let $\Phi$ be a symmetric, positive definite matrix, $\Phi \succ 0$. The induced inner product is $\inner{\cdot,\cdot}_{\Phi}:=\inner{\Phi\cdot,\cdot}$, and the associated norm is $\norm{\cdot}_{\Phi}:=\inner{\cdot,\cdot}_{\Phi}^{1/2}$. We call $\mc H_\Phi$ the Hilbert space with norm $\norm{\cdot}_\Phi$. 
Given a set $\mc X\subseteq\RR^n$, the normal cone mapping is denoted with $\NC_{\scrX}(x)$. $\Id$ is the identity mapping. Given a set-valued operator $A$, the graph of $A$ is the set $\gr(A)=\{(x,y)\vert y\in Ax\}$ The set of zeros is $\Zer A=\{x\in\RR^n \mid 0\in Ax\}$.  
The resolvent of a maximally monotone operator $A$ is the map $\mathrm{J}_{A}=(\Id+A)^{-1}$. If $g:\RR^{n}\to(-\infty,\infty]$ is a proper, lower semi-continuous, convex function, its subdifferential is the maximal monotone operator $\partial g(x)$. The proximal operator is defined as $\prox^{\Phi}_{g}(v)=\op{J}_{\Phi\partial g}(v)$ \cite{BauCom16}.
Given $x_{1}, \ldots, x_{N} \in \RR^{n}, \boldsymbol{x} :=\op{col}\left(x_{1}, \dots, x_{N}\right)=\left[x_{1}^{\top}, \dots, x_{N}^{\top}\right]^{\top}$.

\section{Mathematical Setup: The Monotone Game and Variational Generalized Nash Equilibria}
\label{sec:problem}
We consider a game with $N$ agents where each agent chooses an action $x_{i}\in\RR^{n_i}$, $i\in\mc I=\{1,\dots,N\}$.  

Each agent $i$ has an extended-valued local cost function $J_{i}: \RR^n \to (-\infty,\infty]$ of the form 
\vspace{-.15cm}\begin{equation}\label{eq:f}
J_{i}(x_{i}, \bs x_{-i}):=f_{i}(x_{i},\bs x_{-i}) + g_{i}(x_{i}).
\vspace{-.15cm}\end{equation}
where $\bs x_{-i}=\op{col}(\{x_j\}_{j\neq i})$ is the vector of all decision variables except for $x_i$, and $g_{i}:\RR^{n_{i}}\to(-\infty,\infty]$ is a local idiosyncratic costs function which is possibly non-smooth. Thus, the function $J_{i}$ in \eqref{eq:f} has the typical splitting into smooth and non-smooth parts. 
\begin{standassumption}[Local cost]
For each $i\in\mc I$, the function $g_i$ in \eqref{eq:f} is lower semicontinuous and convex.
For each $i\in\mc I$, $\dom(g_{i})=\Omega_i$ is a closed convex set.
\fineass
\end{standassumption}


Examples for the local cost function are indicator functions to enforce set constraints, or penalty functions that promote sparsity, or other desirable structure. 

For the function $f_i$ in \eqref{eq:f}, we assume convexity and differentiability, as usual in the GNEP literature \cite{facchinei2010}.
\begin{standassumption}[Local convexity]
\label{ass:IC}
For each $i \in \mathcal{I}$ and for all $\bs{x}_{-i} \in \RR^{n-n_i}$, the function $f_{i}(\cdot, \bs{x}_{-i})$ in \eqref{eq:f} is convex and continuously differentiable. 
\hfill$\blacksquare$
\end{standassumption}


We assume that the game displays joint convexity with affine coupling constraints defining the collective feasible set 
\vspace{-.15cm}\begin{equation}\label{eq:coupling}
\bs{\mc{X}}:=\left\{\bs x \in\bs\Omega \mid A \bs{x}-b \leq {\boldsymbol{0}}_{m}\right\}
\vspace{-.15cm}\end{equation}
where $A:=[A_1,\dots, A_N]\in\RR^{m\times n}$ and $b:=\sum_{i=1}^{N}b_{i}\in\RR^m$. 
Effectively, each matrix $A_i\in\RR^{m\times n_i}$ defines how agent $i$ is involved in the coupling constraints, thus we consider it to be private information of agent $i$. 
Then, for each $i$, given the strategies of all other agents $\bs x_{-i}$, the feasible decision set is
\vspace{-.15cm}\begin{equation}
\scrX_{i}(\bs{x}_{-i}) := \left\{y_i \in \Omega_i \mid \, A_i y_i \leq b-\textstyle\sum_{j \neq i}^{N} A_j x_j\right\}.
\vspace{-.15cm}\end{equation}

Next, we assume a constraint qualification condition.

\begin{standassumption}\label{ass_X}
(\textit{Constraint qualification})
The set $\bs{\mc{X}}$ in \eqref{eq:coupling} satisfies Slater's constraint qualification. 
\fineass
\end{standassumption}
The aim of each agent is to solve its local optimization problem
\vspace{-.15cm}\begin{equation}\label{game}
\forall i\in\mc I: \quad\left\{\begin{array}{cl}
\min_{x_i \in  \Omega_i} & J_{i}(x_i, \bs x_{-i}) \\ 
\text { s.t. } & A_i x_i \leq b-\sum_{j \neq i}^{N} A_j x_j.
\end{array}\right.
\vspace{-.15cm}\end{equation}

Thus, the solution concept for such a competitive scenario is the generalized Nash equilibrium \cite{Deb52,facchinei2010}. 

\begin{definition} (\textit{Generalized Nash equilibrium})
A collective strategy $\bs x^{\ast}=\op{col}(x_{1}^{\ast},\ldots,x_{N}^{\ast})\in \bs{\mc{X}}$
is a generalized Nash equilibrium of the game in \eqref{game} if, for all $i\in\mc I$,
\vspace{-.15cm}\begin{equation*}
J_i(x^*_i,\boldsymbol x^*_{-i}) \leq \inf\{ J_i(y,\boldsymbol x^*_{-i}) \, \mid  \, y\in\mc X_i(\bs x_{-i})\}.\vspace{-.4cm}
\vspace{-.15cm}\end{equation*}
\fineass
\end{definition}
To derive optimality conditions characterizing GNE, we define agent $i$'s Lagrangian function as
$\mc L_{i}(x_{i},\lambda_i,  \boldsymbol x_{-i}):=J_{i}(x_i, \bs x_{-i})+\lambda_i^{\top}(A\boldsymbol{x}-b)$
where $\lambda_i\in\RR^m_{\geq 0}$ is the Lagrange multiplier associated with the coupling constraint $A \bs{x} \leq b$. Thanks to the sum rule of the subgradient for Lipschitz continuous functions \cite[\S 1.8]{Cla98}, we can write the subgradient of agent $i$ as 
$ \partial_{x_{i}}J_{i}(x_{i},\boldsymbol x_{-i})=\nabla_{x_{i}} f_{i}(x_{i},\boldsymbol x_{-i})+\partial g_{i}(x_{i})$. Therefore, 
Under Assumption \ref{ass_X}, the Karush--Kuhn--Tucker (KKT) theorem ensures the existence of a pair $(x^*_{i},\lambda^*_i)\in\Omega_{i}\times\R^{m}_{\geq 0}$, such that 
\begin{equation}\label{KKT_game}
\forall i\in\mc I:\begin{cases}
\0_{n_i}\in \nabla_{x_{i}} f_{i}(x^*_{i};\boldsymbol x^*_{-i})+\partial g_{i}(x^*_{i})+A^{\top}_{i}\lambda_i^*\\
\0_{m}\in \NC_{\R^{m}_{\geq0}}(\lambda_i^*)-(A\boldsymbol{x}^*-b).
\end{cases}
\vspace{-.15cm}\end{equation}

We conclude the section by postulating a standard assumption for GNEP's \cite{facchinei2010}, and inclusion problems in general \cite{BauCom16}, concerning the monotonicity and Lipschitz continuity of the mapping that collects the partial gradients $\nabla_{i} f_{i}$.

\begin{standassumption}[Monotonicity]
\label{ass:GM}
The mapping 
\vspace{-.15cm}\begin{equation}\label{eq:F}
F(\boldsymbol x):= \mathrm{col}\left( \nabla_{x_1}f_{1}(\bs x),\ldots,\nabla_{x_N}f_{N}(\bs x)\right)
\vspace{-.15cm}\end{equation}
is monotone on $\bs\Omega$, i.e., for all $\bs x,\bs y\in\bs\Omega$,
$\langle F(\bs x)-F(\bs y),\bs x-\bs y\rangle\geq 0.$
and $\frac{1}{\beta}$-Lipschitz continuous, $\beta > 0$, i.e., for all $\bs x,\bs y\in\bs\Omega$,\vspace{-.15cm}
$\norm{F(\bs x)-F(\bs y)} \leq \tfrac{1}{\beta}\norm{\bs x-\bs y}.$
\fineass
\end{standassumption}
\vspace{.15cm}
Among all possible GNEs of the game, this work focuses on the computation of a \emph{variational GNE} (v-GNE) \cite[Def. 3.10]{facchinei2010}, i.e. a GNE in which all players share consensus on the dual variables: 
\vspace{-.15cm}\begin{equation}\label{KKT_VI}
\forall i\in\mc I:\begin{cases}
\0_{n_i}\in \nabla_{x_{i}}f_{i}(x^{*}_{i};\boldsymbol{x}^{*}_{-i})+\partial g_{i}(x^{*}_{i})+A_{i}^{\top}\lambda^{\ast}\\
\0_{m}\in \NC_{\R^{m}_{\geq0}}(\lambda^*)-(A\boldsymbol{x}^*-b).\\
\end{cases}
\vspace{-.15cm}\end{equation}

\section{Distributed Generalized Nash equilibrium seeking via Operator Splitting}
In this section, we present the proposed distributed algorithms. We allow each agent to have information on his own local problem data only, i.e., $J_{i},\Omega_{i}, A_{i}$ and $b_{i}$. We let each agent $i$ control its local decision $x_{i}$, and a local copy $\lambda_{i}\in\R^{m}_{\geq0}$ of dual variables, as well as a local auxiliary variable $z_{i}\in\R^{m}$ used to enforce consensus of the dual variables. 
To actually reach consensus on the dual variables, we let the agents exchange information via an undirected weighted \emph{communication graph}, represented by its weighted adjacency matrix $\boldsymbol W = [w_{i,j}]_{i,j}\in\RR^{N\times N}$. We assume $w_{ij}>0$ iff $(i,j)$ is an edge in the communication graph. The set of neighbours of agent $i$ in the graph is $\mc{N}_{i}^{\lambda}=\{j |w_{i,j}>0\}$.
\vspace{.15cm}
\begin{standassumption}[Graph connectivity]\label{ass:graph}
The matrix $\bW$ is symmetric and irreducible.\fineass
\end{standassumption}
Define the weighted Laplacian as $\bL:=\diag\left\{(\bW\1_{N})_{1}, \dots, (\bW\1_{N})_{N}\right\}-\bW$. It holds that $\bL^{\top}=\bL$, $\ker(\bL)=\Span(\1_{N})$ and that, given Standing Assumption \ref{ass:graph}, $\bL$ is positive semi-definite with real and distinct eigenvalues $0=s_{1}<s_{2}\leq \ldots \leq s_{N}$.
Moreover, given the maximum degree of the graph $\mc G^\lambda$, $\Delta:=\max_{i\in\mc I}(\bW\1_{N})_{i}$, it holds that $\Delta \leq s_{N}\leq 2\Delta$. Denoting by $\kappa:=\abs{\bL}$, it holds that $\kappa\leq 2\Delta$.  We define the tensorized Laplacian as the matrix $\bar{\bL}=\bL\otimes I_{m}$. We set $\bar{b}=(b_1,\ldots,b_{N})^{\top}$, $\boldsymbol x=\op{col}(x_{1},\ldots,x_{N})$ and similarly $\boldsymbol z$ and $\boldsymbol \lambda$.

Let ${\bf{A}}=\op{diag}(A_1,\dots,A_N)$ and define 

\vspace{-.15cm}\begin{equation}\label{eq:AB}
\begin{aligned}
\mc{A}(\boldsymbol{x},\bs z,\bs \lambda):=\op{col}(F({\bf x}),{\bf{0}}_{mN},\bar{\bf{L}}\bs\lambda+\bar b),\\
\mc{B}(\boldsymbol{x},\bs z,\bs \lambda):=\op{col}({\bf{A}}^{\top}\bs \lambda, \bar{\bf{L}}\bs \lambda,-{\bf{A}}\boldsymbol{x}-\bar{\bf{L}}\bs z)
\end{aligned}
\vspace{-.15cm}\end{equation}
Let us also define the operator $\mc D :=\mc A+\mc B$, and the set-valued operator
\vspace{-.15cm}\begin{equation}\label{eq:C}
\mc C(\boldsymbol{x},\bs z,\bs \lambda)=G(\boldsymbol{x})\times\{{\bf{0}}_{mN}\}\times \NC_{\RR_{ \geq 0}^{mN}}(\bs \lambda)
\vspace{-.15cm}\end{equation}
where $G(\bs x)=\partial g_{1}(x_{1})\times\cdots\times\partial g_{N}(x_{N})$.

Let us summarize the properties of the operators above.
\begin{lemma}\label{lemma_op}
The following statements hold:
\begin{itemize}
\item[(i)] $\mc A$ is maximally monotone and $L_{\mc A}=( \frac{1}{\beta}+\kappa)$-Lipschitz continuous.
\item[(ii)] $\mc B$ is maximally monotone and $L_{\mc B}=(2|{\bf {A}}|+2\kappa)$-Lipschitz continuous.
\item[(iii)] $\mc D$is maximally monotone and $L_{\mc D}=L_{\mc A}+L_{\mc B}$-Lipschitz continuous.
\item[(iv)] $\mc C+\mc D$ is maximally monotone.
\end{itemize}
\end{lemma}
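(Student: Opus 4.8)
The plan is to dispatch the four claims in order, exploiting throughout that $\mc A$, $\mc B$, and $\mc D$ are \emph{single-valued} maps defined everywhere on the product space $\scrH := \RR^{n}\times\RR^{mN}\times\RR^{mN}$. The workhorse fact is that a monotone operator which is single-valued and continuous on all of $\scrH$ is automatically maximally monotone \cite[Cor.~20.28]{BauCom16}. Consequently, for (i)--(iii) the only genuine work is to verify monotonicity and to bound the Lipschitz modulus; maximality then comes for free from continuity and full domain.

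For (i), I would split the quadratic form $\langle \mc A w - \mc A w', w-w'\rangle$ along the block structure of $\mc A$ in \eqref{eq:AB}, writing $w=(\bs x,\bs z,\bs\lambda)$. The $\bs z$-block contributes nothing, the $\bs x$-block contributes $\langle F(\bs x)-F(\bs x'),\bs x-\bs x'\rangle\ge 0$ by the monotonicity part of Standing Assumption~\ref{ass:GM}, and the $\bs\lambda$-block contributes $\langle \bar{\bL}(\bs\lambda-\bs\lambda'),\bs\lambda-\bs\lambda'\rangle\ge 0$, since $\bar{\bL}=\bL\otimes I_m$ inherits positive semidefiniteness from $\bL$. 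Hence $\mc A$ is monotone, and being continuous with full domain it is maximally monotone. For the Lipschitz bound I would start from $\|\mc A w-\mc A w'\|=\sqrt{\|F(\bs x)-F(\bs x')\|^2+\|\bar{\bL}(\bs\lambda-\bs\lambda')\|^2}$, apply $\sqrt{a^2+b^2}\le a+b$, insert the $\tfrac1\beta$-Lipschitz bound on $F$ together with $|\bar{\bL}|=|\bL|=\kappa$, and dominate each block-norm by $\|w-w'\|$ to recover exactly $L_{\mc A}=\tfrac1\beta+\kappa$.

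For (ii), the key observation is that $\mc B$ in \eqref{eq:AB} is a \emph{linear} map $w\mapsto Mw$ whose matrix $M$ is skew-symmetric: the $(\bs x,\bs\lambda)$-coupling enters as $\bA^{\top}$ and $-\bA$, and the $(\bs z,\bs\lambda)$-coupling as $\bar{\bL}$ and $-\bar{\bL}$, which are negatives of one another's transposes (using $\bar{\bL}^{\top}=\bar{\bL}$). Therefore $\langle \mc B w, w\rangle = 0$ for every $w$, so $\mc B$ is monotone, and as a bounded linear monotone operator it is maximally monotone. Its Lipschitz constant is the operator norm $|M|$, which I would bound by the same $\sqrt{\sum(\cdot)^2}\le\sum(\cdot)$ device applied block-wise, collecting $|\bA|$ from each of the two occurrences of $\bA,\bA^{\top}$ and $\kappa$ from each of the two occurrences of $\bar{\bL}$, yielding $L_{\mc B}=2|\bA|+2\kappa$. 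Claim (iii) is then immediate: $\mc D=\mc A+\mc B$ is monotone as a sum of monotone maps, remains single-valued, continuous, and full-domain, hence maximally monotone, and $L_{\mc D}\le L_{\mc A}+L_{\mc B}$ follows from the triangle inequality.

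The only claim demanding real care is (iv), since the sum of two maximally monotone operators need not be maximally monotone without a constraint qualification. I would first argue that $\mc C$ in \eqref{eq:C} is maximally monotone as the direct product, over the three blocks, of the maximally monotone maps $G=\partial(\sum_i g_i)$ (a subdifferential of a proper lsc convex function, by the Standing Assumption on local costs), the zero map on the $\bs z$-block, and the normal-cone operator $\NC_{\RR_{\geq 0}^{mN}}$ of a nonempty closed convex set; products of maximally monotone operators are maximally monotone. Then, invoking the sum theorem \cite[Cor.~25.5]{BauCom16}, maximal monotonicity of $\mc C+\mc D$ follows from the qualification $\dom\mc C\cap\op{int}\dom\mc D\neq\emptyset$, which holds trivially because $\dom\mc D=\scrH$ gives $\op{int}\dom\mc D=\scrH$ while $\dom\mc C\neq\emptyset$. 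The main obstacle is thus conceptual rather than computational: one must resist treating the maximal monotonicity of the sum as automatic, and the full-domain property of $\mc D$ obtained in (iii) is exactly what discharges the qualification.
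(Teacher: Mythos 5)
Your proof is correct and follows essentially the same route as the paper's: monotonicity plus single-valuedness, continuity and full domain for (i)--(iii), the Lipschitz constants via the same block-wise norm bounds, and the sum rule for (iv). The only difference is that you spell out details the paper delegates to citations --- the skew-symmetry of $\mc B$ and the qualification $\dom\mc C\cap\op{int}\dom\mc D\neq\emptyset$ needed for the sum theorem --- which, if anything, makes your treatment of (iv) more self-contained than the paper's one-line reference.
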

\begin{proof}
(i) The operator $\mc A$ is maximally monotone by \cite[Prop. 20.23]{BauCom16}.
Furthermore, given $\kappa=\abs{\bL}$ and $\bs u:=\op{col}(\bs x,\bs z,\bs \lambda)$, it holds that 
\vspace{-.15cm}\begin{equation}\label{A_lip}
\begin{aligned}
\norm{ \mc A\bs u-\mc A\bs u'}\leq& \norm{F(\bs x)-F(\bs x')}+\norm{\bar{\bL}(\bs z-\bs z')}\\
\leq&( \frac{1}{\beta}+\kappa)\left(\norm{\bs x-\bs x'}+\norm{\bs z-\bs z'}\right),
\end{aligned}
\vspace{-.15cm}\end{equation}
showing that $\mc A$ is $L_{\mc A}:=(\frac{1}{\beta}+\kappa)$-Lipschitz continuous.\\
(ii) The operator $\mc B$ is maximally monotone by \cite[Cor. 20.28]{BauCom16}.
By a computation similar to \eqref{A_lip}, it can be shown that $\mc B$ is $L_{\mc B}=(2|{\bf {A}}|+2\kappa)$-Lipschitz continuous. \\
(iii) The operator $\mc D$ is maximally monotone since $\dom(\mc B)=\bs\Omega$ \cite[Prop. 20.23]{BauCom16}. It is $L_{\mc D}=L_{\mc A}+L_{\mc B}$-Lipschitz continuous because sum of Lipschitz operators.\\
(iv) The operator $\mc C$ is maximally monotone by \cite[Lem. 5]{yi2019} and $\mc C+\mc D$ is maximally monotone by \cite[Prop. 20.23]{BauCom16}.
\end{proof}

The following result, which immediately follows from the definition of the inner product $\langle\cdot,\cdot\rangle_\Phi$, holds for monotone operators and it will be recalled later on.
\begin{lemma}\label{lemma_mono}
Let $\Phi\succ0$ and $\mc T$ be a monotone operator, then $\Phi^{-1}\mc T$ is monotone in the Hilbert space $\mc H_\Phi$.
\fineass\end{lemma}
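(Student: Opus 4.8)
The plan is to unwind the two relevant definitions and chain them; the statement is really a direct consequence of how the induced inner product is built. First I would recall what monotonicity of $\Phi^{-1}\mc T$ in the Hilbert space $\mc H_\Phi$ means: for every pair $(x,u)$ and $(y,v)$ in $\gr(\Phi^{-1}\mc T)$ one must have $\langle u-v,\, x-y\rangle_\Phi\geq 0$, where by definition $\langle\cdot,\cdot\rangle_\Phi=\langle\Phi\cdot,\cdot\rangle$.

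The key step is to translate between the graph of $\Phi^{-1}\mc T$ and that of $\mc T$. Since $\Phi\succ 0$ is invertible, I would observe the equivalence
\[
u\in(\Phi^{-1}\mc T)x \iff \Phi u\in \mc T x,
\]
and likewise $\Phi v\in \mc T y$. Thus the pairs $(x,\Phi u)$ and $(y,\Phi v)$ belong to $\gr(\mc T)$. Because $\mc T$ is monotone in the Euclidean sense, these pairs satisfy $\langle \Phi u-\Phi v,\, x-y\rangle\geq 0$.

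Finally I would identify this Euclidean inequality with the desired $\Phi$-weighted one: by linearity,
\[
\langle \Phi u-\Phi v,\, x-y\rangle=\langle \Phi(u-v),\, x-y\rangle=\langle u-v,\, x-y\rangle_\Phi\geq 0,
\]
which is exactly the monotonicity of $\Phi^{-1}\mc T$ in $\mc H_\Phi$. I do not anticipate any genuine obstacle here: the argument is a definition chase, and the positive definiteness of $\Phi$ is used only to guarantee that $\langle\cdot,\cdot\rangle_\Phi$ is a bona fide inner product and that the equivalence $u\in\Phi^{-1}\mc T x \iff \Phi u\in\mc T x$ is valid. The single point requiring mild care is keeping track of the direction of that equivalence, so that the Euclidean monotonicity inequality is applied to the correct pairs.
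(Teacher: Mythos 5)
Your argument is correct and is exactly the definition chase the paper has in mind: the paper offers no written proof, stating only that the lemma ``immediately follows from the definition of the inner product $\langle\cdot,\cdot\rangle_\Phi$,'' and your unwinding of $\gr(\Phi^{-1}\mc T)$ together with the identity $\langle\Phi(u-v),x-y\rangle=\langle u-v,x-y\rangle_\Phi$ supplies precisely those omitted details. Nothing further is needed.
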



Now, given the operators $\mc A$, $\mc B$ and $\mc C$ as in \eqref{eq:AB} and \eqref{eq:C}, a simple proof shows that the zeros of the sum $\mc A+\mc B+\mc C$ are v-GNE of the game in \eqref{game}. In fact, this follows verbatim from \cite[Thm. 2]{yi2019}, so we omit the detailed proof here. 
\begin{lemma}
The set $\Zer(\mc A+\mc B+\mc C)$ is the set of v-GNE of the game in \eqref{game} and it is non-empty.
\fineass
\end{lemma}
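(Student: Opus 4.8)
The plan is to show a two-way correspondence between $\Zer(\mc A+\mc B+\mc C)$ and the KKT system \eqref{KKT_VI} characterizing the v-GNE, and then invoke the constraint qualification to guarantee non-emptiness. First I would write out the inclusion $\0\in(\mc A+\mc B+\mc C)(\bs x,\bs z,\bs\lambda)$ componentwise. Using the definitions in \eqref{eq:AB} and \eqref{eq:C}, the three blocks read
\begin{equation*}
\begin{cases}
\0 \in F(\bs x)+{\bf A}^{\top}\bs\lambda+G(\bs x),\\
\0 = \bar{\bf L}\bs\lambda,\\
\0 \in \bar{\bf L}\bs\lambda+\bar b+\bar{\bf L}\bs z-{\bf A}\bs x+\NC_{\RR_{\geq0}^{mN}}(\bs\lambda).
\end{cases}
\end{equation*}
The key observation, which I would extract from the middle block, is that $\bar{\bf L}\bs\lambda=\0$ together with the kernel characterization $\ker(\bL)=\Span(\1_N)$ forces the local dual copies to coincide, i.e. $\lambda_1=\dots=\lambda_N=:\lambda^\ast$. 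This is precisely the consensus on the dual variables that distinguishes a v-GNE from a general GNE.

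Next I would substitute this consensus back into the first and third blocks. Once $\bar{\bf L}\bs\lambda=\0$, the third block collapses to $\0\in\bar b+\bar{\bf L}\bs z-{\bf A}\bs x+\NC_{\RR_{\geq0}^{mN}}(\bs\lambda)$; summing this relation over the $N$ agents and using $\1_N^\top\bL=\0$ (so the Laplacian term vanishes in the aggregate) together with $\sum_i b_i=b$ and ${\bf A}=\diag(A_1,\dots,A_N)$ recovers exactly the complementarity condition $\0_m\in\NC_{\RR_{\geq0}^m}(\lambda^\ast)-(A\bs x-b)$ of \eqref{KKT_VI}. The first block, read agent-by-agent using $G(\bs x)=\partial g_1(x_1)\times\cdots\times\partial g_N(x_N)$ and the block structure of ${\bf A}^\top$, yields $\0_{n_i}\in\nabla_{x_i}f_i(\bs x)+\partial g_i(x_i)+A_i^\top\lambda^\ast$ for each $i$, which is the first line of \eqref{KKT_VI}. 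This establishes that any zero projects onto a v-GNE; the converse direction is obtained by reversing the argument, choosing the auxiliary variable $\bs z$ to absorb the residual (any $\bs z$ with $\bar{\bf L}\bs z$ matching the required slack), which is always possible since the range condition is compatible.

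For non-emptiness, I would argue that Standing Assumption~\ref{ass_X} (Slater's constraint qualification) guarantees, via the KKT theorem already quoted before \eqref{KKT_game}, the existence of a solution pair $(\bs x^\ast,\lambda^\ast)$ to the variational KKT system \eqref{KKT_VI}; lifting $\lambda^\ast$ to the consensual vector $\bs\lambda^\ast=\1_N\otimes\lambda^\ast$ and selecting a compatible $\bs z^\ast$ then produces an element of $\Zer(\mc A+\mc B+\mc C)$. The main obstacle I anticipate is the bookkeeping in the converse direction, namely verifying that a feasible auxiliary variable $\bs z$ can always be chosen to satisfy the third inclusion simultaneously with dual consensus; this hinges on a range/compatibility argument for $\bar{\bf L}$ on the consensus subspace and its orthogonal complement. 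Since this correspondence is asserted to follow verbatim from \cite[Thm.~2]{yi2019}, I would lean on that reference for the detailed equivalence and confine my own verification to the componentwise unpacking sketched above.
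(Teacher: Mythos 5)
Your componentwise unpacking, the consensus step via $\ker(\bL)=\Span(\1_N)$, the aggregation of the third block using $\1_N^\top\bL=\0_N^\top$, and the lifting $\bs\lambda^\ast=\1_N\otimes\lambda^\ast$ in the converse are exactly the content of \cite[Thm.~2]{yi2019}, which is all the paper itself invokes for this lemma; so on the equivalence between $\Zer(\mc A+\mc B+\mc C)$ and the KKT system \eqref{KKT_VI} you are on the paper's route. (Minor bookkeeping: the third block of $\mc B$ in \eqref{eq:AB} carries $-\bar{\bf{L}}\bs z$, not $+\bar{\bf{L}}\bs z$; this is harmless, since that term vanishes in the aggregation and is absorbed by the choice of $\bs z$ in the converse direction.)

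The one genuine gap is in the non-emptiness claim. Slater's condition together with the KKT theorem is a \emph{necessary-condition} statement: it produces a multiplier $\lambda^\ast$ \emph{given} a solution $\bs x^\ast$ of the variational inequality associated with $F$ and $\bs{\mc X}$; it does not by itself produce that solution. To conclude $\Zer(\mc A+\mc B+\mc C)\neq\emptyset$ you must first establish solvability of the variational inequality, which under Standing Assumption~\ref{ass:GM} (monotone, Lipschitz continuous $F$) requires an additional compactness or coercivity argument on $\bs{\mc X}$ (the standard existence theory for monotone variational inequalities, see e.g.\ \cite{facchinei2007}); only then does Slater's constraint qualification convert the solution into a KKT pair satisfying \eqref{KKT_VI}, which you then lift to a zero of $\mc A+\mc B+\mc C$ by choosing a compatible $\bs z^\ast$. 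The paper elides this by deferring entirely to \cite[Thm.~2]{yi2019}, where the existence step is carried out under exactly such assumptions; your proof should make that step explicit rather than attributing existence to the KKT theorem alone.
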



\subsection{Forward-backward operator splitting} 
\label{sec:FB}
The aim of this section is to revisit a distributed forward-backward (FB) splitting algorithm for the  distributed computation of a v-GNE, see Algorithm \ref{FB_algo} \cite{yi2019}. 
\begin{algorithm}[h]
\caption{Preconditioned Forward Backward}\label{FB_algo}
Initialization: $x_i^0 \in \Omega_i, \lambda_i^0 \in \RR_{\geq0}^{m},$ and $z_i^0 \in \RR^{m} .$\\
Iteration $k$: Agent $i$\\
($1$) Receives $x_j^k$ for $j \in \mathcal{N}_{i}^{J}, \lambda_j^k$ for $j \in \mathcal{N}_{i}^{\lambda}$, then updates
$$\begin{aligned}
&x_i^{k+1}=\op{prox}^{\rho_i}_{g_{i}}[x_i^k-\rho_{i}(\nabla_{x_{i}} f_{i}(x_i^k,\boldsymbol x_{-i}^k)-A_{i}^{T} \lambda_i^k)]\\
&z_i^{k+1}=z_i^k+\sigma_{i} \sum\nolimits_{j \in \mathcal{N}_{i}^{\lambda}} w_{i,j}(\lambda_i^k-\lambda_j^k)
\end{aligned}$$
($2$) Receives $z_{j}^{k+1}$ for $j \in \mathcal{N}_{i}^{\lambda}$, then updates
$$\begin{aligned}
 \lambda_i^{k+1}=&\op{proj}_{\RR^m_{\geq 0}}\{\lambda_i^k-\tau_{i}[A_{i}(2 x_i^{k+1}-x_i^k)-b_{i}\\
&+\sum\nolimits_{j \in \mathcal{N}_{i}^{\lambda}} w_{i,j}[2(z_i^{k+1}-z_{j, k+1})-(z_i^k-z_j^k)]\\
&+\sum\nolimits_{j \in \mathcal{N}_{i}^{\lambda}} w_{i,j}(\lambda_i^k-\lambda_j^k)]\}\\
\end{aligned}$$
\end{algorithm}
From now on, the triplet $\bs u:=\op{col}(\bs x,\bs z,\bs \lambda)$ defines the state variable of a distributed algorithm. Given the state at iteration $k$, $\bs u^k=(\bs x^k,\bs z^k,\bs \lambda^k)$, the FB algorithm can be written as fixed-point iteration of the form $\bs u^{k+1}=T_{\text{FB}}\bs u^k,$ where
\vspace{-.15cm}\begin{equation}\label{eq:FB}
T_{\text{FB}}:= \mathrm{J}_{\Phi_{\text{FB}}^{-1}(\mc C+\mc B)}(\op{Id}-\Phi_{\text{FB}}^{-1}\mc A).
\vspace{-.15cm}\end{equation}
and $\Phi_{\text{FB}}$ is the preconditioning matrix defined as 
\vspace{-.15cm}\begin{equation}\label{eq:phi}
\Phi_{\text{FB}}:=\left[\begin{array}{ccc} 
\rho^{-1} & 0 & -\bA^{\top}\\
0 & \sigma^{-1} & -\bar{\bL}\\
-\bA & -\bar{\bL} &  \tau^{-1}
\end{array}\right].
\vspace{-.15cm}\end{equation}
The matrices $\rho=\diag\{\rho_{1}I_{n_{1}},\ldots,\rho_{N} I_{n_{N}}\}$, $\sigma$ and $\tau$ (defined analoguosly)
collect the step sizes of the primal, the auxiliary and the dual updates, respectively. By choosing the step sizes appropriately, the preconditioning matrix $\Phi$ can be made positive definite \cite{belgioioso2018}. The FB algorithm is known to converge to a zero of a monotone inclusion $0 \in \mc A+\mc B+\mc C$ when the operators are maximally monotone and the single-valued operator $\Phi_{\text{FB}}^{-1}\mc A$ is cocoercive \cite[Thm. 26.14]{BauCom16}. Thus, the pseudo-gradient mapping $F$ in \eqref{eq:F} should satisfy the following assumption.
\begin{assumption}[Strong monotonicity]
\label{ass:Hstrong}
For all $\bs x,\bs x'\in \bs{\Omega}$,
$\inner{F(\boldsymbol x)-F(\boldsymbol{x}'),\bs x-\bs x'}\geq\eta\norm{\boldsymbol{x}-\boldsymbol{x}'}^{2}$,
for some $\eta>0$.
\fineass
\end{assumption}

To ensure the cocoercivity condition, we refer to the following result.

\begin{lemma}\cite[Lem. 5 and Lem. 7]{yi2019}\label{lemma_coco}
Let $\Phi\succ0$ and $F$ as in \eqref{eq:F} satisfy Assumption \ref{ass:Hstrong}. Then, the following hold:
\begin{itemize}
\item[(i)] $\mc A$ is $\theta$-cocoercive with $\theta\leq\min\{1/2\Delta,\eta\beta^2\}$.
\item[(ii)] $\Phi^{-1}\mc A$ is $\alpha\theta$-cocoercive with $\alpha=1/\abs{\Phi^{-1}}$. 
\fineass
\end{itemize}
\end{lemma}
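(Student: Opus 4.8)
The plan is to prove (i) directly from the definition of cocoercivity, exploiting the block structure of $\mc A$. Writing $\bs u=\op{col}(\bs x,\bs z,\bs\lambda)$ and $\bs u'=\op{col}(\bs x',\bs z',\bs\lambda')$, the constant $\bar b$ cancels in the difference and $\mc A\bs u-\mc A\bs u'=\op{col}(F(\bs x)-F(\bs x'),\0,\bar{\bL}(\bs\lambda-\bs\lambda'))$ couples no two of the three blocks. Hence both $\inner{\mc A\bs u-\mc A\bs u',\bs u-\bs u'}$ and $\norm{\mc A\bs u-\mc A\bs u'}^2$ decompose as an $\bs x$-term plus a $\bs\lambda$-term, the $\bs z$-block contributing zero to each. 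It therefore suffices to establish the cocoercivity inequality $\inner{\cdot,\cdot}\ge\theta\norm{\cdot}^2$ separately on the $\bs x$- and $\bs\lambda$-blocks for one common constant $\theta$.

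For the $\bs x$-block I would combine strong monotonicity (Assumption \ref{ass:Hstrong}) with the Lipschitz bound of Standing Assumption \ref{ass:GM}: since $\norm{F(\bs x)-F(\bs x')}^2\le\frac{1}{\beta^2}\norm{\bs x-\bs x'}^2$, we obtain $\inner{F(\bs x)-F(\bs x'),\bs x-\bs x'}\ge\eta\norm{\bs x-\bs x'}^2\ge\eta\beta^2\norm{F(\bs x)-F(\bs x')}^2$, which forces $\theta\le\eta\beta^2$. For the $\bs\lambda$-block, set $\bs w=\bs\lambda-\bs\lambda'$ and use that $\bar{\bL}=\bL\otimes I_m$ is symmetric positive semidefinite with eigenvalues those of $\bL$ (each of multiplicity $m$); diagonalizing in its eigenbasis reduces the target inequality $\inner{\bar{\bL}\bs w,\bs w}\ge\theta\norm{\bar{\bL}\bs w}^2$ to $\sum_i s_i(1-\theta s_i)\abs{w_i}^2\ge0$. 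On $\ker(\bar{\bL})$ both sides vanish, while on the range the factor $1-\theta s_i$ is nonnegative as soon as $\theta\le1/s_N$; invoking $s_N\le2\Delta$ yields the sufficient condition $\theta\le1/(2\Delta)$. Taking $\theta\le\min\{1/(2\Delta),\eta\beta^2\}$ validates the inequality on both blocks at once, proving (i).

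For (ii) I would pass to the Hilbert space $\mc H_\Phi$ and invoke the defining relation $\inner{\cdot,\cdot}_\Phi=\inner{\Phi\,\cdot,\cdot}$. Abbreviating $\bs a=\mc A\bs u-\mc A\bs u'$ and $\bs d=\bs u-\bs u'$, the numerator collapses to the standard inner product, $\inner{\Phi^{-1}\bs a,\bs d}_\Phi=\inner{\bs a,\bs d}$, which is $\ge\theta\norm{\bs a}^2$ by part (i); for the denominator, $\norm{\Phi^{-1}\bs a}_\Phi^2=\inner{\bs a,\Phi^{-1}\bs a}\le\abs{\Phi^{-1}}\norm{\bs a}^2$ since $\Phi^{-1}\succ0$ has spectral norm $\abs{\Phi^{-1}}$. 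Chaining these gives $\inner{\Phi^{-1}\bs a,\bs d}_\Phi\ge\theta\norm{\bs a}^2\ge\frac{\theta}{\abs{\Phi^{-1}}}\norm{\Phi^{-1}\bs a}_\Phi^2$, i.e.\ $\alpha\theta$-cocoercivity of $\Phi^{-1}\mc A$ in $\mc H_\Phi$ with $\alpha=1/\abs{\Phi^{-1}}$.

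The block estimates are routine; the point that needs care is the $\bs\lambda$-block, where one must treat $\ker(\bar{\bL})$ separately (the zero eigenvalue makes the naive ratio $\inner{\bar{\bL}\bs w,\bs w}/\norm{\bar{\bL}\bs w}^2$ indeterminate) and correctly convert the spectral bound $s_N\le2\Delta$ into the step-size-independent constant $1/(2\Delta)$ appearing in the statement.
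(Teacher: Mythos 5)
Your proof is correct, and since the paper offers no proof of this lemma beyond citing \cite[Lem.~5 and Lem.~7]{yi2019}, your block-diagonal decomposition of $\mc A$ (strong monotonicity plus Lipschitz continuity giving $\eta\beta^2$-cocoercivity of $F$, the spectral bound $\theta\le 1/s_N\le 1/(2\Delta)$ for the $\bar{\bL}$-block with the kernel handled separately, and the change of metric to $\mc H_\Phi$ for part (ii)) is exactly the standard argument behind the cited result. No gaps.
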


We recall that convergence to a v-GNE has been demonstrated in \cite[Th. 3]{yi2019}, if the step sizes in \eqref{eq:phi} are chosen small enough \cite[Lem. 6]{yi2019}.

\subsection{Forward-backward-forward splitting}
\label{sec:FBF}
In this section, we propose our distributed forward-backward-forward (FBF) scheme, Algorithm \ref{FBF_algo}.

\begin{algorithm}
\caption{Distributed Forward Backward Forward}\label{FBF_algo}
Initialization: $x_i^0 \in \Omega_i, \lambda_i^0 \in \RR_{\geq0}^{m},$ and $z_i^0 \in \RR^{m} .$\\
Iteration $k$: Agent $i$\\
($1$) Receives $x_j^k$ for $j \in \mathcal{N}_{i}^{J}$, $ \lambda_j^k$ and $z_{j,k}$ for $j \in \mathcal{N}_{i}^{\lambda}$ then updates
$$\begin{aligned}
&\tilde x_i^k=\op{prox}^{\rho_i}_{g_{i}}[x_i^k-\rho_{i}(\nabla_{x_{i}} f_{i}(x_i^k,\boldsymbol x_{-i}^k)-A_{i}^{T} \lambda_i^k)]\\
&\tilde z_i^k=z_i^k+\sigma_{i} \sum\nolimits_{j \in \mathcal{N}_{i}^{\lambda}} w_{i,j}(\lambda_i^k-\lambda_j^k)\\
&\tilde\lambda_i^k=\op{proj}_{\RR^m_{\geq 0}}\{\lambda_i^k-\tau_{i}(A_{i}x_i^k-b_{i})\\
&\quad\quad+\tau\sum\nolimits_{j \in \mathcal{N}_{i}^{\lambda}} w_{i,j}[(z_{i}^{k}-z_j^k)-(\lambda_i^k-\lambda_j^k)]\}
\end{aligned}$$
($2$) Receives $\tilde x_j^k$ for $j \in \mathcal{N}_{i}^{J}$, $ \tilde \lambda_j^k$and $\tilde z_{j}^{k}$ for $j \in \mathcal{N}_{i}^{\lambda}$ then updates
$$\begin{aligned}
&x_i^{k+1}=\tilde x_i^k-\rho_{i}(\nabla_{x_{i}} f_{i}(x_i^k,\boldsymbol x_{-i}^k)-\nabla_{\tilde x_{i}} f_{i}(\tilde x_i^k,\tilde {\boldsymbol{x}}_{-i}^k))\\
&\quad\quad\quad-\rho_iA_{i}^{T} (\lambda_i^k-\tilde \lambda_{i,k})\\
&z_i^{k+1}=\tilde z_i^k+\sigma_{i} \sum\nolimits_{j \in \mathcal{N}_{i}^{\lambda}} w_{i,j}[(\lambda_i^k-\lambda_j^k)-(\tilde\lambda_i^k-\tilde\lambda_j^k)]\\
&\lambda_i^{k+1}=\tilde{\lambda}_i^{k}+\tau_iA_i(\tilde{x}_{i}^{k}-x_{i}^{k})\\
&\quad\quad\quad-\tau_i\sum\nolimits_{j \in \mathcal{N}_{i}^{\lambda}} w_{i,j}[(z_i^k-z_j^k)-(\tilde z_i^k-\tilde z_j^k)]\\
&\quad\quad\quad+\tau_i\sum\nolimits_{j \in \mathcal{N}_{i}^{\lambda}} w_{i,j}[(\lambda_{i,k}-\lambda_j^k)-(\tilde\lambda_i^k-\tilde\lambda_j^k)]\\
\end{aligned}$$
\end{algorithm}

In compact form, the FBF algorithm generates two sequences $(\bs u^{k},\bs v^{k})_{k\geq 0}$ as follows: 
\vspace{-.15cm}\begin{equation}\label{FBF}
\begin{aligned}
\bs u^{k}&=J_{\Psi^{-1} \mc C}(\bs v^{k}-\Psi^{-1} \mc D \bs v^{k})\\
\bs v^{k+1}&=\bs u^{k}+\Psi^{-1} (\mc D\bs v^{k}-\mc D\bs u^{k}).
\end{aligned}
\vspace{-.15cm}\end{equation}

In \eqref{FBF}, $\Psi$ is the block-diagonal matrix of the step sizes: 
\vspace{-.15cm}\begin{equation}\label{Psi}
\Psi=\op{diag}(\rho^{-1},\sigma^{-1}, \tau^{-1}),
\vspace{-.15cm}\end{equation}

We recall that $\mc D=\mc A+\mc B$ is single-valued, maximally monotone and Lipschitz continuous by Lemma \ref{lemma_op}. Each iteration differs from the scheme in \eqref{eq:FB} by one additional forward step and the fact that the resolvent is now defined in terms of the operator $\mc C$ only. Writing the coordinates as $\bs u^{k}=(\tilde{\boldsymbol{x}}^{k},\tilde{\bs z}^{k},\tilde{\bs \lambda}^{k})$ and $\bs v^{k}=(\boldsymbol{x}^{k},\boldsymbol z^{k},\boldsymbol \lambda^{k})$, the updates are explicitly given in Algorithm \ref{FBF_algo}.

FBF operates on the splitting $\mc C+\mc D$ and it can be compactly written as the fixed-point iteration
$\bs v^{k+1}=T_{\text{FBF}} \, \bs v^{k},$
where the mapping $T_{\text{FBF}}$ is defined as  
\vspace{-.15cm}\begin{equation}\label{eq:T_FBF}
T_{\text{FBF}}:=\Psi^{-1}\mc D+(\Id-\Psi^{-1}\mc D)\circ \op{J}_{\Psi^{-1}\mc C}\circ(\Id-\Psi^{-1}\mc D). 
\vspace{-.15cm}\end{equation}


To ensure convergence of Algorithm \ref{FBF_algo} to a v-GNE of the game in \eqref{game}, we need the next assumption.

\begin{assumption}\label{step_FBF}
$\abs{\Psi^{-1}} < 1/L_{\mc D}$, with $\Psi$ as in \eqref{Psi} and $L_{\mc D}$ being the Lipschitz constant of $\mc D$ as in Lemma \ref{lemma_op}.
\fineass
\end{assumption}


\begin{theorem}\label{theo_FBF}
Let Assumption \ref{step_FBF} hold. The sequence $(\bs x^k,\bs \lambda^k)$ generated by Algorithm \ref{FBF_algo} converges to 
$\op{zer}(\mc A+\mc B+\mc C)$, thus the primal variable converges to a v-GNE of the game in \eqref{game}.
\end{theorem}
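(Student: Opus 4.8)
The plan is to recognize the recursion \eqref{FBF} as Tseng's forward--backward--forward splitting \cite{tseng2000}, with unit step size, for the monotone inclusion $\0\in(\Psi^{-1}\mc C+\Psi^{-1}\mc D)\bs v$ posed in the Hilbert space $\mc H_\Psi$ associated with the preconditioner $\Psi$ in \eqref{Psi}. Since $\Psi$ is block diagonal with positive step-size entries, $\Psi\succ0$ is invertible, so premultiplication by $\Psi^{-1}$ leaves the solution set unchanged: $\Zer(\Psi^{-1}\mc C+\Psi^{-1}\mc D)=\Zer(\mc C+\mc D)=\Zer(\mc A+\mc B+\mc C)$, which is the nonempty set of v-GNE identified above. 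Hence it suffices to prove that the sequence $(\bs v^k)$ generated by \eqref{eq:T_FBF} converges to a point of this set and then to read off its primal and dual blocks.

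First I would verify, in $\mc H_\Psi$, the three hypotheses of Tseng's theorem: that $\Psi^{-1}\mc C$ is maximally monotone, that $\Psi^{-1}\mc D$ is single-valued and maximally monotone, and that $\Psi^{-1}\mc D$ is Lipschitz with modulus strictly below the reciprocal of the (unit) step size. Monotonicity of both operators in $\mc H_\Psi$ follows at once from Lemma \ref{lemma_mono} together with the monotonicity of $\mc C$ and $\mc D$ recorded in Lemma \ref{lemma_op}. To upgrade to maximal monotonicity in $\mc H_\Psi$ I would observe that $\op{ran}(\Id+\Psi^{-1}\mc C)=\mc H_\Psi$ is equivalent to $\op{ran}(\Psi+\mc C)=\RR^{n+2mN}$, which holds by Minty's theorem since $\mc C$ is maximally monotone and $\Psi\succ0$; the identical argument applies to $\mc D$, and single-valuedness of $\Psi^{-1}\mc D$ is inherited from that of $\mc D$. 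This maximal monotonicity is precisely what renders the resolvent $\op{J}_{\Psi^{-1}\mc C}$ in \eqref{FBF} single-valued with full domain.

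The quantitative core is the Lipschitz estimate for $\Psi^{-1}\mc D$ in the $\Psi$-norm. For any $\bs u,\bs v$ one has
\[
\norm{\Psi^{-1}\mc D\bs u-\Psi^{-1}\mc D\bs v}_\Psi^2=\inner{\mc D\bs u-\mc D\bs v,\Psi^{-1}(\mc D\bs u-\mc D\bs v)}\le\abs{\Psi^{-1}}\norm{\mc D\bs u-\mc D\bs v}^2,
\]
and, combining $\norm{\mc D\bs u-\mc D\bs v}\le L_{\mc D}\norm{\bs u-\bs v}$ with $\norm{\cdot}^2\le\abs{\Psi^{-1}}\norm{\cdot}_\Psi^2$, one obtains $\norm{\Psi^{-1}\mc D\bs u-\Psi^{-1}\mc D\bs v}_\Psi\le\abs{\Psi^{-1}}L_{\mc D}\norm{\bs u-\bs v}_\Psi$. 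Thus $\Psi^{-1}\mc D$ is $\abs{\Psi^{-1}}L_{\mc D}$-Lipschitz in $\mc H_\Psi$, and Assumption \ref{step_FBF} gives exactly $\abs{\Psi^{-1}}L_{\mc D}<1$, i.e.\ the modulus is smaller than the reciprocal of the unit step size, as Tseng's method requires. Invoking Tseng's convergence theorem \cite{tseng2000} then yields convergence of $(\bs v^k)$ to some $\bs v^\star\in\Zer(\Psi^{-1}\mc C+\Psi^{-1}\mc D)$; weak convergence is norm convergence here, since the space is finite-dimensional and $\norm{\cdot}_\Psi$ is equivalent to the Euclidean norm. By the identity of the zero sets, $\bs v^\star\in\Zer(\mc A+\mc B+\mc C)$, so its primal block is a v-GNE of \eqref{game} and the pair $(\bs x^k,\bs\lambda^k)$ converges to the corresponding components of $\bs v^\star$.

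The hard part will be the metric bookkeeping: every step must be carried out in $\mc H_\Psi$ rather than in the Euclidean space, transporting monotonicity, maximal monotonicity and the Lipschitz modulus correctly through the change of inner product, and checking that Assumption \ref{step_FBF} coincides with the condition $\abs{\Psi^{-1}}L_{\mc D}<1$ demanded by Tseng's scheme at unit step size. A secondary delicate point is the passage from mere monotonicity (Lemma \ref{lemma_mono}) to maximal monotonicity in $\mc H_\Psi$, which is what legitimizes the resolvent step in \eqref{FBF}.
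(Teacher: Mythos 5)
Your proposal is correct and follows essentially the same route as the paper: both recast \eqref{FBF} as Tseng's forward--backward--forward iteration for $\Psi^{-1}\mc C+\Psi^{-1}\mc D$ in the Hilbert space $\mc H_\Psi$, show that Assumption \ref{step_FBF} yields the Lipschitz modulus $\abs{\Psi^{-1}}L_{\mc D}<1$ in the $\Psi$-norm (the quantity $L/\alpha$ in the paper's appendix), and conclude via \cite[Th. 3.4]{tseng2000} together with $\Zer(\Psi^{-1}(\mc C+\mc D))=\Zer(\mc A+\mc B+\mc C)$. Your explicit Minty-type verification that maximal monotonicity survives the change of metric is a detail the paper leaves implicit, but it does not change the argument.
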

\begin{proof}
The fixed-point iteration with $T_{\text{FBF}}$ as in \eqref{eq:T_FBF} can be derived from \eqref{FBF} by substituting $\bs u_k$.
Therefore, the sequence $(\bs x^k,\bs\lambda^k)$ generated by Algorithm \ref{FBF_algo} converges to a v-GNE by \cite[Th.26.17]{BauCom16} and \cite[Th.3.4]{tseng2000} since $\Psi^{-1}\mc A$ is monotone by Lemma \ref{lemma_mono} and $\mc A+\mc B+\mc C$ is maximally monotone by Lemma \ref{lemma_op}. See Appendix \ref{sec:FBF} for details.
\end{proof}

We emphasize that Algorithm \ref{FBF_algo} does not require strong monotonicity (Assumption \ref{ass:Hstrong}) of the pseudo-gradient mapping $F$ in \eqref{eq:F}.
Moreover, we note that the FBF algorithm requires two evaluations of the individual gradients, which requires computing the operator $\mc D$ twice per iteration. At the level of the individual agents, this means that we need two communication rounds per iteration in order to exchange the necessary information. Compared with the FB algorithm, the non-strong monotonicity assumption comes at the price of increased communications at each iteration.

\subsection{Forward-backward-half forward splitting}
\label{sec:FBHF}

Should the strong monotonicity condition (Assumption \ref{ass:Hstrong}) be satisfied, an alternative to the FB is the \emph{forward-backward-half-forward} (FBHF) operator splitting, developed in \cite{briceno2018}. Thus, our second GNE seeking algorithm is a distributed FBHF, described in Algorithm \ref{FBHF_algo}.

\begin{algorithm}
\caption{Distributed Forward Backward Half Forward}\label{FBHF_algo}
Initialization: $x_i^0 \in \Omega_i, \lambda_i^0 \in \RR_{\geq0}^{m},$ and $z_i^0 \in \RR^{m} .$\\
Iteration $k$: Agent $i$\\
($1$) Receives $x_j^k$ for $j \in \mathcal{N}_{i}^{J}$, $ \lambda_j^k$ and $z_{j,k}$ for $j \in \mathcal{N}_{i}^{\lambda}$ then updates
$$\begin{aligned}
&\tilde x_i^k=\op{prox}^{\rho_i}_{g_{i}}[x_i^k-\rho_{i}(\nabla_{x_{i}} f_{i}(x_i^k,\boldsymbol x_{-i}^k)-A_{i}^{T} \lambda_i^k)]\\
&\tilde z_i^k=z_i^k+\sigma_{i} \sum\nolimits_{j \in \mathcal{N}_{i}^{\lambda}} w_{i,j}(\lambda_i^k-\lambda_j^k)\\
&\tilde\lambda_i^k=\op{proj}_{\RR^m_{\geq 0}}\{\lambda_i^k-\tau_{i}(A_{i}x_i^k-b_{i})\\
&\quad\quad+\tau\sum\nolimits_{j \in \mathcal{N}_{i}^{\lambda}} w_{i,j}[(z_{i}^{k}-z_j^k)-(\lambda_i^k-\lambda_j^k)]\}
\end{aligned}$$
($2$) Receives $ \tilde \lambda_j^k$and $\tilde z_{j,k}$ for $j \in \mathcal{N}_{i}^{\lambda}$ then updates
$$\begin{aligned}
&x_i^{k+1}=\tilde x_i^k+\rho_iA_{i}^{T} (\lambda_i^k-\tilde \lambda_{i,k})]\\
&z_i^{k+1}=\tilde z_i^k+\sigma_{i} \sum\nolimits_{j \in \mathcal{N}_{i}^{\lambda}} w_{i,j}[(\lambda_i^k-\lambda_j^k)-(\tilde\lambda_i^k-\tilde\lambda_j^k)]\\
&\lambda_i^{k+1}=\tilde{\lambda}_i^{k}+\tau_iA_i(\tilde{x}_{i}^{k}-x_{i}^{k})\\
&\quad\quad\quad-\tau_i\sum\nolimits_{j \in \mathcal{N}_{i}^{\lambda}} w_{i,j}[(z_i^k-z_j^k)-(\tilde z_i^k-\tilde z_j^k)]\\
\end{aligned}$$
\end{algorithm}
In compact form, the FBHF algorithm reads as
\vspace{-.15cm}\begin{equation}\label{FBHF}
\begin{aligned}
\bs u^{k} & = \mathrm{J}_{\Psi^{-1}\mc C}(\bs v^{k}-\Psi^{-1} (\mc A+\mc B) \bs v^{k}) \\
\bs v^{k+1} & =\bs u^{k}+\Psi^{-1}(\mc B\bs v^k- \mc B\bs u^k).
\end{aligned}
\vspace{-.15cm}\end{equation}
We note that the iterates of FBHF are similar to those of the FBF, but the second forward step requires the operator $\mc B$ only. 
More simply, we can write the FBHF as the fixed-point iteration
$\bs v^{k+1}=T_{\text{FBHF}}\bs v^{k},$
where 
\vspace{-.15cm}\begin{equation}\label{eq:T_FBHF}
T_{\text{FBHF}}=(\Id-\Psi^{-1}\mc B)\circ \op{J}_{\Psi^{-1}\mc C}\circ (\Id-\Psi^{-1}\mc D)+\Psi^{-1}\mc B. 
\vspace{-.15cm}\end{equation}

Also in this case, we have a bound on the step sizes.

\begin{assumption}\label{step_FBHF}
$|\Psi^{-1}| \leq \min\{2\theta_{\mc A},1/L_{\mc B}\}$,
with $\theta_{\mc A}$ as in Lemma \ref{lemma_coco} and $L_{\mc B}$ as in Lemma \ref{lemma_op}.
\fineass
\end{assumption}

We note that in Assumption \ref{step_FBHF}, the step sizes in $\Psi$ can be chosen larger compared to those in Assumption \ref{step_FBF}, since the upper bound is related to the Lipschitz constant of the operator $\mc B$, not of $L_{\mc D}=L_{\mc A}+L_{\mc B}$ as for the FBF (Assumption \ref{step_FBF}). A similar comparison can be done with respect to the FB algorithm. Intuitively, larger step sizes should be beneficial in term of convergence speed.

We can now establish our convergence result for the FBHF algorithm.

\begin{theorem}
Let Assumptions \ref{ass:Hstrong} and \ref{step_FBHF} hold. The sequence $(\bs x^k,\bs \lambda^k)$ generated by Algorithm \ref{FBHF_algo} converges to $\op{zer}(\mc A+\mc B+\mc C)$, thus the primal variable converges to 
a v-GNE of the game in \eqref{game}. \fineass
\end{theorem}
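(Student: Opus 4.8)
The plan is to recognize the fixed-point map $T_{\text{FBHF}}$ in \eqref{eq:T_FBHF} as an instance of the forward-backward-half-forward splitting of \cite{briceno2018} applied to the monotone inclusion $\0 \in (\mc C+\mc A+\mc B)\bs u$, but read off in the preconditioned Hilbert space $\mc H_\Psi$ rather than in the standard Euclidean space. The decisive structural observation is the role played by each operator: the resolvent in \eqref{FBHF} acts on $\mc C$ (the backward step), the cocoercive operator $\mc A$ is evaluated only once (the ``half'' forward step), and the Lipschitz monotone operator $\mc B$ is evaluated twice, once in the forward step and once in the correction $\bs v^{k+1}=\bs u^k+\Psi^{-1}(\mc B\bs v^k-\mc B\bs u^k)$. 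This matches exactly the template of \cite{briceno2018}, with the preconditioner $\Psi^{-1}$ absorbed into the metric so that the iteration becomes a unit-step-size FBHF for the operators $\Psi^{-1}\mc C$, $\Psi^{-1}\mc A$, $\Psi^{-1}\mc B$ in $\mc H_\Psi$.

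Second, I would verify the three hypotheses of the FBHF convergence theorem in $\mc H_\Psi$. Maximal monotonicity of $\Psi^{-1}\mc C$ follows from maximal monotonicity of $\mc C$ (Lemma \ref{lemma_op}) together with Lemma \ref{lemma_mono}. Cocoercivity of $\Psi^{-1}\mc A$ is where Assumption \ref{ass:Hstrong} enters: by Lemma \ref{lemma_coco}, strong monotonicity of $F$ makes $\mc A$ cocoercive, and $\Psi^{-1}\mc A$ is $\alpha\theta_{\mc A}$-cocoercive in $\mc H_\Psi$ with $\alpha=1/|\Psi^{-1}|$. Finally, $\Psi^{-1}\mc B$ is monotone in $\mc H_\Psi$ by Lemma \ref{lemma_mono}, and one checks through the norm-equivalence $\|\Psi^{-1}w\|_\Psi\le\sqrt{|\Psi^{-1}|}\,\|w\|$ and $\|w\|\le\sqrt{|\Psi^{-1}|}\,\|w\|_\Psi$ that its Lipschitz constant in the $\Psi$-metric is $|\Psi^{-1}|\,L_{\mc B}$.

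Third, I would confirm that the unit step size of the preconditioned iteration lies in the admissible range of \cite{briceno2018}. Here Assumption \ref{step_FBHF} is exactly what is needed: the bound $|\Psi^{-1}|\le 2\theta_{\mc A}$ keeps the step below twice the cocoercivity constant $\alpha\theta_{\mc A}=\theta_{\mc A}/|\Psi^{-1}|$ of the $\mc A$-part, while $|\Psi^{-1}|\le 1/L_{\mc B}$ keeps it below the reciprocal Lipschitz constant $1/(|\Psi^{-1}|L_{\mc B})$ of the $\mc B$-part. A pleasant feature is that the factor $|\Psi^{-1}|$ cancels inside the coupled FBHF threshold $4\beta'/(1+\sqrt{1+16\beta'^2L'^2})$ with $\beta'=\theta_{\mc A}/|\Psi^{-1}|$ and $L'=|\Psi^{-1}|L_{\mc B}$, so that admissibility of the unit step reduces to a scale-invariant bound on $|\Psi^{-1}|$ expressed through $\theta_{\mc A}$ and $L_{\mc B}$ alone. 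Invoking the FBHF theorem then yields $\bs v^k\to\bs v^\ast\in\op{zer}(\mc A+\mc B+\mc C)$, and since the zeros of $\mc A+\mc B+\mc C$ are the v-GNE of \eqref{game}, the primal component $\bs x^k$ converges to a v-GNE.

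I expect the main obstacle to be precisely this last quantitative step: translating the step-size hypothesis from the Euclidean statement of \cite{briceno2018} into the $\Psi$-weighted norm and checking that Assumption \ref{step_FBHF} places the unit step strictly inside the FBHF convergence interval. The operator-theoretic verifications of the first two paragraphs are essentially bookkeeping once the correspondence with \cite{briceno2018} is fixed; by contrast, the change of metric distorts both the cocoercivity and the Lipschitz moduli, and one must track these distortions carefully to see that the two separate bounds in Assumption \ref{step_FBHF} jointly control the single coupled threshold of the abstract theorem.
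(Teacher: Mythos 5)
Your proposal follows essentially the same route as the paper: both recast Algorithm \ref{FBHF_algo} as the fixed-point iteration \eqref{eq:T_FBHF}, i.e., a unit-step FBHF splitting for $\Psi^{-1}\mc C$, $\Psi^{-1}\mc A$, $\Psi^{-1}\mc B$ in the weighted space $\mc H_\Psi$, verify cocoercivity of $\Psi^{-1}\mc A$ via Lemma \ref{lemma_coco} under Assumption \ref{ass:Hstrong} and the Lipschitz/monotonicity properties of $\Psi^{-1}\mc B$ in the new metric, and then invoke \cite[Th. 2.3]{briceno2018} under the step-size bound of Assumption \ref{step_FBHF}. The paper's appendix additionally re-derives the quasi-Fej\'er inequality explicitly in $\mc H_\Psi$, and the quantitative issue you flag at the end (whether $\min\{2\theta_{\mc A},1/L_{\mc B}\}$ really places the unit step inside the coupled threshold $4\beta'/(1+\sqrt{1+16\beta'^2L'^2})$ rather than merely inside its upper envelope) is a legitimate subtlety that the paper itself glosses over in the same way.
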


\begin{proof}
Algorithm \ref{FBHF_algo} is the fixed point iteration in \eqref{eq:T_FBHF} whose convergence is guaranteed by \cite[Th. 2.3]{briceno2018} under Assumption \ref{step_FBHF} because $\Psi^{-1}\mc A$ is cocoercive by Lemma \ref{lemma_coco}. See Appendix \ref{sec:FBHF} for details.
\end{proof}

\section{Case study and numerical simulations}
We consider a networked Cournot game with market capacity constraints \cite{yi2019}. 
As a numerical setting, we use a set of 20 companies and 7 markets, similarly to \cite{yi2019}. Each company $i$ has a local constraint $x_i\in(0,\delta_i)$ where each component of $\delta_i$ is randomly drawn from $[1, 1.5]$. The maximal capacity of each market $j$ is $b_j$, randomly drawn from $[0.5, 1]$. The local cost function of company $i$ is $c_i(x_i) = \pi_i\sum_{j=1}^{n_i} ([x_i]_j)^2 + r^\top x_i$, where $[x_i]_j$ indicates the $j$ component of $x_i$.
For all $i\in\mc I$, $\pi_i$ is randomly drawn from $[1, 8]$, and the components of $r_i$ are randomly drawn from $[0.1, 0.6]$. Notice that $c_i(x_i)$ is strongly convex with Lipschitz continuous gradient. The price is taken as a linear function $P= \bar P-DA\boldsymbol x$ where each component of $\bar P =\op{col}(\bar P_1,\dots,\bar P_7)$ is randomly drawn from $[2,4]$ while the entries of $D=\op{diag}(d_1,\dots,d_7)$ are randomly drawn from $[0.5,1]$. Recall that the cost function of company $i$ is influenced by the variables of the agents selling in the same market. Such informations can be retrieved from \cite[Fig. 1]{yi2019}. Since $c_i(x_i)$ is strongly convex with Lipschitz continuous gradient and the prices are linear, the pseudo gradient of $f_i$ is strongly monotone. The communication graph $\mc G^\lambda$ for the dual variables is a cycle graph with the addition of the edges $(2,15)$ and $(6,13)$. As local cost functions $g_i$ we use the indicator functions. In this way, the proximal step is a projection on the local constraints sets.

The aim of these simulations is to compare the proposed schemes.
The step sizes are taken differently for every algorithm. In particular, we take $\rho_{\text{FB}}$, $\sigma_{\text{FB}}$ and $\tau_{\text{FB}}$ as in \cite[Lem. 6]{yi2019}, $\rho_{\text{FBF}}$, $\sigma_{\text{FBF}}$ and $\tau_{\text{FBF}}$ such that Assumption \ref{step_FBF} is satisfied and $\rho_{\text{FBHF}}$, $\sigma_{\text{FBHF}}$ and $\tau_{\text{FBHF}}$ such that Assumption \ref{step_FBHF} holds. We select them to be the maximum possible.

The initial points $\lambda_i^0$ and $z_i^0$ are set to 0 while the local decision variable $x_i^0$ is randomly taken in the feasible sets.

The plots in Fig. \ref{distance_sol} show the performance parameter $\frac{\norm{\boldsymbol{x}_{k+1}-\boldsymbol{x}^{*}}}{\norm{\boldsymbol{x}^{*}}}$, that is, the convergence to a solution $\bs x^*$, and the CPU time (in seconds) used by each algorithm. We run 10 simulations, changing the parameters of the cost function to show that the result are replicable. The darker line represent the average path towards the solution.

The plot in Fig \ref{distance_sol} shows that with suitable parameters convergence to a solution is faster with the FBF algorithm which, however, is computationally more expansive than the FB and FBHF algorithms.

\begin{figure}[h]
\centering
\includegraphics[scale=.22]{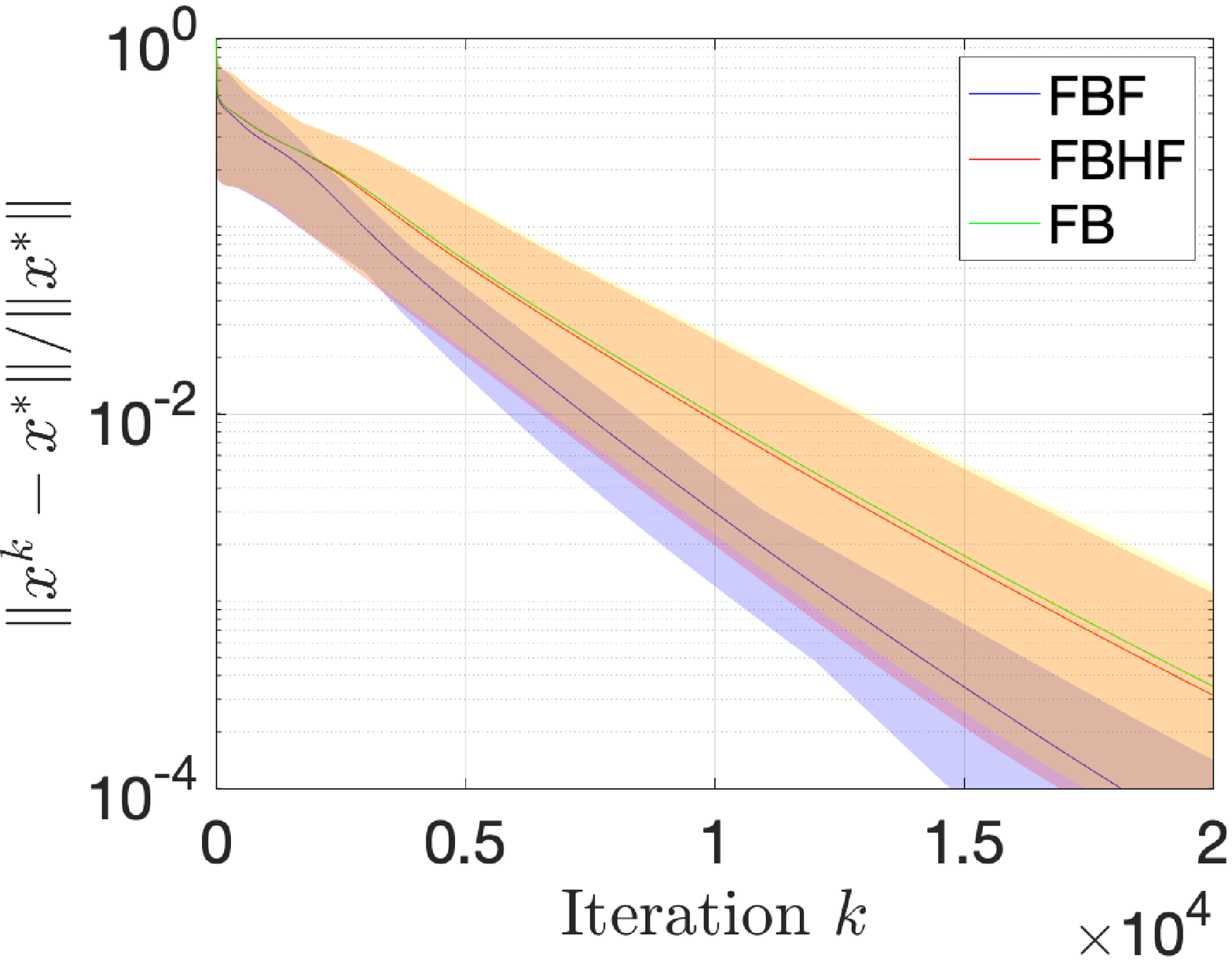}\hspace{-.2cm}
\includegraphics[scale=.22]{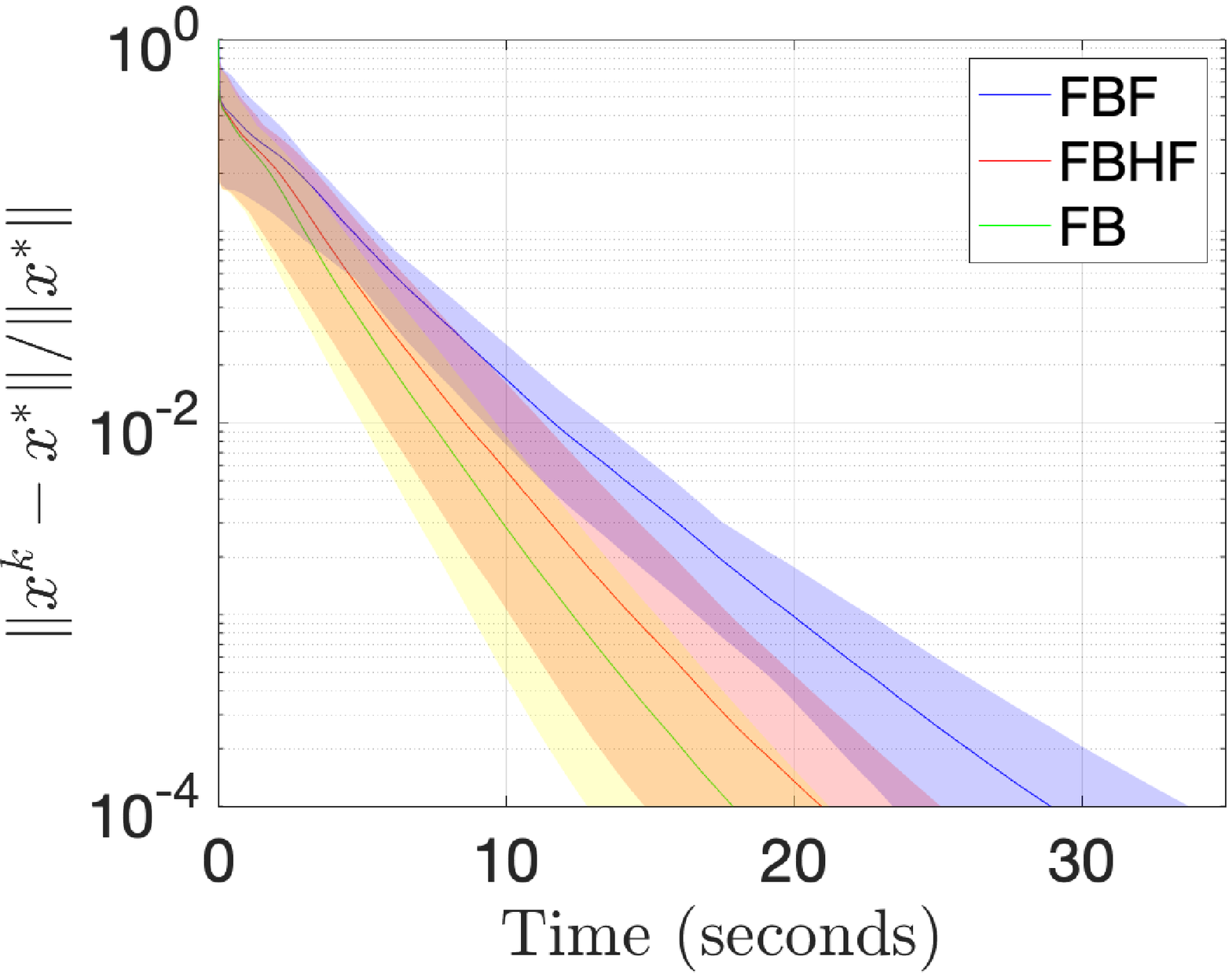}
\caption{Relative distance from v-GNE (left) and cumulative CPU time (right).}\label{distance_sol}
\end{figure}
\vspace{-.2cm}

\section{Conclusion}

The FBF and the FBHF splitting methods generate distributed equilibrium seeking algorithms for solving generalized Nash equilibrium problems. Compared to the FB, the FBF has the advantage to converge under the non-strong monotonicity assumption. This comes at the price of increased communications between the agents. If strong monotonicity holds, an alternative to the FBF is the FBHF that, in our numerical experience is less computationally expensive than the FBF.

\section{Appendix}
\subsection{Convergence of the forward-backward-forward}\label{app:FBF}
We show the convergence proof for the FBF. From now on, $\setH=\RR^n\times\RR^{mN}\times\RR^{mN}$ and $\op{fix}(T)=\{x\in\setH:Tx=x\}$.
\begin{proposition}
If Assumption \ref{step_FBF} holds, $\op{fix}(T_{\text{FBF}})=\mc Z$. 
\end{proposition}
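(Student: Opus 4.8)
The plan is to prove the set equality $\op{fix}(T_{\text{FBF}})=\mc Z$ by a double inclusion, using the identification $\mc Z=\Zer(\mc C+\mc D)$ (with $\mc D=\mc A+\mc B$) supplied by the preceding lemma. I would work in the Hilbert space $\mc H_\Psi$, where $\Psi^{-1}\mc C$ is maximally monotone by Lemma \ref{lemma_mono}; hence the resolvent $\op{J}_{\Psi^{-1}\mc C}=(\Id+\Psi^{-1}\mc C)^{-1}$ appearing in \eqref{eq:T_FBF} is single-valued and everywhere defined, and I will use freely that $\mc D$ is single-valued and $L_{\mc D}$-Lipschitz (Lemma \ref{lemma_op}).

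For the inclusion $\mc Z\subseteq\op{fix}(T_{\text{FBF}})$, I would take $\bs v\in\mc Z$, so that $0\in(\mc C+\mc D)\bs v$ and therefore $-\mc D\bs v\in\mc C\bs v$. Rearranging this inclusion gives $\bs v-\Psi^{-1}\mc D\bs v\in(\Id+\Psi^{-1}\mc C)\bs v$, i.e. $\bs v=\op{J}_{\Psi^{-1}\mc C}(\bs v-\Psi^{-1}\mc D\bs v)$. Substituting this into \eqref{eq:T_FBF}, the two copies of $\Psi^{-1}\mc D\bs v$ cancel and one obtains $T_{\text{FBF}}\bs v=\bs v$. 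Note that this direction does not invoke the step-size bound.

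For the reverse inclusion I would fix any $\bs v\in\op{fix}(T_{\text{FBF}})$ and abbreviate $\bs u:=\op{J}_{\Psi^{-1}\mc C}(\bs v-\Psi^{-1}\mc D\bs v)$, so that the fixed-point equation rewritten through \eqref{FBF} reads $\bs v-\bs u=\Psi^{-1}(\mc D\bs v-\mc D\bs u)$. The defining inclusion of the resolvent is $\Psi(\bs v-\bs u)-\mc D\bs v\in\mc C\bs u$; substituting $\Psi(\bs v-\bs u)=\mc D\bs v-\mc D\bs u$ collapses this to $-\mc D\bs u\in\mc C\bs u$, that is $\bs u\in\Zer(\mc C+\mc D)=\mc Z$. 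It remains to identify $\bs v$ with $\bs u$: from $\bs v-\bs u=\Psi^{-1}(\mc D\bs v-\mc D\bs u)$ and Lipschitz continuity I get $\norm{\bs v-\bs u}\le\abs{\Psi^{-1}}L_{\mc D}\norm{\bs v-\bs u}$, and Assumption \ref{step_FBF}, i.e. $\abs{\Psi^{-1}}L_{\mc D}<1$, forces $\bs v=\bs u\in\mc Z$.

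The main obstacle is precisely this last step: the algebraic manipulations only yield that the \emph{auxiliary} point $\bs u$ is a zero of $\mc C+\mc D$, and it is exactly the contraction estimate enabled by Assumption \ref{step_FBF} that rules out spurious fixed points with $\bs v\neq\bs u$. Everything else is routine rearrangement of the resolvent identity, and since $\mc Z\neq\emptyset$ by the earlier lemma the equality also certifies $\op{fix}(T_{\text{FBF}})\neq\emptyset$.
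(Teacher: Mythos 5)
Your proof is correct and follows essentially the same route as the paper: the forward inclusion via the resolvent identity $-\mc D\bs v\in\mc C\bs v\Leftrightarrow\bs v=\op{J}_{\Psi^{-1}\mc C}(\bs v-\Psi^{-1}\mc D\bs v)$, and the reverse inclusion by reading off $\bs v-\bs u=\Psi^{-1}(\mc D\bs v-\mc D\bs u)$ from the fixed-point equation and using the Lipschitz bound $\abs{\Psi^{-1}}L_{\mc D}<1$ from Assumption \ref{step_FBF} to force $\bs v=\bs u$. Your extra intermediate observation that the auxiliary point $\bs u$ already lies in $\mc Z$ is a harmless (and slightly cleaner) reordering of the paper's argument.
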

\begin{proof}
We first show that $\mc Z\subseteq \op{fix}(T_{\text{FBF}})$. Let $u^{\ast}\in\mc Z$: 
\vspace{-.15cm}\begin{equation*}
\begin{aligned}
0\in \mc Cu^{\ast} +\mc Du^{\ast} & \Leftrightarrow -\mc Du^{\ast} \in \mc Cu^{\ast} \\
&\Leftrightarrow u^{\ast}=J_{\Psi^{-1}\mc C}(u^{\ast}-\Psi^{-1}\mc Du^{\ast})\\
&\Leftrightarrow \Psi^{-1}\mc Du^{\ast}=\Psi^{-1}\mc DJ_{\Psi^{-1}\mc C}(u^{\ast}-\Psi^{-1}\mc Du^{\ast})\\
&\Leftrightarrow u^{\ast}=T_{\text{FBF}}u^{\ast}. 
\end{aligned}
\vspace{-.15cm}\end{equation*}
Conversely, let $u^{\ast}\in\op{fix}(T_{\text{FBF}})$. Then 
$u^{\ast}-J_{\Psi^{-1}\mc C}(u^{\ast}-\Psi^{-1}\mc Du^{\ast})=
\Psi^{-1}\mc Du^{\ast}-\Psi^{-1}\mc DJ_{\Psi^{-1}\mc C}(u^{\ast}-\Psi^{-1}\mc Du^{\ast})$
ans
\vspace{-.15cm}\begin{equation*}\begin{aligned}
&\norm{u^{\ast}-J_{\Psi^{-1}\mc C}(u^{\ast}-\Psi^{-1}\mc Du^{\ast})}\leq\\
&\leq \alpha^{-1}\norm{\mc Du^{\ast} -\mc DJ_{\Psi^{-1}\mc C}(u^{\ast}-\Psi^{-1}\mc Du^{\ast})}\\
&\leq \tfrac{L}{\alpha}\norm{u^{\ast}-J_{\Psi^{-1}\mc C}(u^{\ast}-\Psi^{-1}\mc Du^{\ast})}.
\end{aligned}\vspace{-.15cm}\end{equation*}
Hence, 
$u^{\ast}=J_{\Psi^{-1}\mc C}(u^{\ast}-\Psi^{-1}\mc Du^{\ast})$. 
\end{proof}

\begin{proposition}
For all $u^{\ast}\in\op{fix}(T_{\text{FBF}})$ and $v\in\setH$, there exists $\varepsilon\geq 0$ such that 
\vspace{-.15cm}\begin{equation}\label{eq:Fejer}
\norm{T_{\text{FBF}}v-u^{\ast}}^{2}_{\Psi}= \norm{v-u^{\ast}}^{2}_{\Psi}-\left(1-(L/\alpha)^{2}\right)\norm{u-v}^{2}_{\Psi}-2\varepsilon.
\vspace{-.15cm}\end{equation}
\end{proposition}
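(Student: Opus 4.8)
The plan is to work throughout in the Hilbert space $\mc H_\Psi$, abbreviating $\bar{\mc D}:=\Psi^{-1}\mc D$ and $\bar{\mc C}:=\Psi^{-1}\mc C$. By Lemma \ref{lemma_mono} both $\bar{\mc D}$ and $\bar{\mc C}$ are monotone in $\mc H_\Psi$, and $\bar{\mc C}$ is maximally monotone there, so that $\op{J}_{\bar{\mc C}}=\op{J}_{\Psi^{-1}\mc C}$ is well defined and satisfies the standard resolvent inclusion. Setting $u:=\op{J}_{\bar{\mc C}}(v-\bar{\mc D}v)$ and $p:=T_{\text{FBF}}v=u+\bar{\mc D}v-\bar{\mc D}u$, the first step is to record the two inclusions that drive the argument: the resolvent identity gives $(v-\bar{\mc D}v)-u\in\bar{\mc C}u$, while the characterization $\fix(T_{\text{FBF}})=\mc Z$ from the previous proposition yields $-\bar{\mc D}u^{\ast}\in\bar{\mc C}u^{\ast}$ for any $u^{\ast}\in\fix(T_{\text{FBF}})$.

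Next I would establish the nonnegativity of the monotonicity slack. Writing $d:=\bar{\mc D}v-\bar{\mc D}u$, the decomposition $(v-u)-d=\big(v-\bar{\mc D}v-u+\bar{\mc D}u^{\ast}\big)+\big(\bar{\mc D}u-\bar{\mc D}u^{\ast}\big)$ lets me split the quantity $\tilde\varepsilon:=\inner{(v-u)-d,u-u^{\ast}}_\Psi$ into two pieces. The first is nonnegative by monotonicity of $\bar{\mc C}$ applied to the two inclusions above; the second is nonnegative by monotonicity of $\bar{\mc D}$. Hence $\tilde\varepsilon\geq0$.

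The core computation is then a purely algebraic expansion of $\norm{p-u^{\ast}}_\Psi^{2}$. Using $p-u^{\ast}=(u-u^{\ast})+d$ together with the expansion of $\norm{u-u^{\ast}}_\Psi^{2}$ against $\norm{v-u^{\ast}}_\Psi^{2}$, I obtain the exact identity $\norm{p-u^{\ast}}_\Psi^{2}=\norm{v-u^{\ast}}_\Psi^{2}-\norm{v-u}_\Psi^{2}-2\tilde\varepsilon+\norm{d}_\Psi^{2}$. To reach the stated coefficient I would bound $\norm{d}_\Psi^{2}$ via the Lipschitz constant of $\bar{\mc D}$ in $\mc H_\Psi$: combining the Euclidean $L_{\mc D}$-Lipschitz continuity of $\mc D$ (Lemma \ref{lemma_op}) with the norm relations $\norm{\Psi^{-1}w}_\Psi=\norm{w}_{\Psi^{-1}}\leq|\Psi^{-1}|^{1/2}\norm{w}$ and $\norm{x}\leq|\Psi^{-1}|^{1/2}\norm{x}_\Psi$ shows that $\bar{\mc D}$ is $(L/\alpha)$-Lipschitz in $\mc H_\Psi$, with $L=L_{\mc D}$ and $\alpha=1/|\Psi^{-1}|$. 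Thus $\norm{d}_\Psi^{2}\leq(L/\alpha)^{2}\norm{v-u}_\Psi^{2}$.

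Finally I would package the two sources of slack into one term. Adding and subtracting $(L/\alpha)^{2}\norm{v-u}_\Psi^{2}$ in the exact identity and defining $2\varepsilon:=2\tilde\varepsilon+(L/\alpha)^{2}\norm{v-u}_\Psi^{2}-\norm{d}_\Psi^{2}$ yields precisely \eqref{eq:Fejer}, with $\varepsilon\geq0$ since $\tilde\varepsilon\geq0$ and the Lipschitz bound makes the remaining difference nonnegative. The step I expect to be the main obstacle is not the algebra but the weighted-norm bookkeeping in the Lipschitz estimate: one must pass carefully between the Euclidean constant $L_{\mc D}$ and the $\Psi$-norm to land exactly on the factor $(L/\alpha)^{2}$, and one must be disciplined about absorbing both the $\bar{\mc C}$/$\bar{\mc D}$ monotonicity slack and the (generally non-tight) Lipschitz slack into a single nonnegative $\varepsilon$, so that the claim holds as an \emph{equality} rather than merely an inequality.
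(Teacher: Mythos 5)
Your proof is correct and follows essentially the same route as the paper's: expand $\norm{T_{\text{FBF}}v-u^{\ast}}_{\Psi}^{2}$ algebraically, extract the nonnegative slack $\tilde\varepsilon$ from monotonicity of $\Psi^{-1}\mc C$ and $\Psi^{-1}\mc D$ via the two graph inclusions, and bound the correction term $\norm{\Psi^{-1}(\mc Dv-\mc Du)}_{\Psi}$ by $(L/\alpha)\norm{v-u}_{\Psi}$. If anything, your explicit redefinition $2\varepsilon:=2\tilde\varepsilon+(L/\alpha)^{2}\norm{v-u}_{\Psi}^{2}-\norm{d}_{\Psi}^{2}$ is a small improvement in rigor: the paper only derives the inequality version and leaves the absorption of the Lipschitz slack (needed for the stated \emph{equality}) implicit.
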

\begin{proof}
Let $u^{\ast}\in\op{fix}(T_{\text{FBF}})$ and $u=J_{\Psi^{-1}\mc C}(v-\Psi^{-1}\mc D v),v^{+}=T_{\text{FBF}}v$, for $v\in\setH$ arbitrary. Then, 
\vspace{-.18cm}\begin{equation*}\begin{aligned}
\norm{v-u^{\ast}}_{\Psi}^{2}&=\norm{v-u+u-v^{+}+v^{+}-u^{\ast}}^{2}_{\Psi}\\
&=\norm{v-u}^{2}_{\Psi}+\norm{u-v^{+}}^{2}_{\Psi}+\norm{v^{+}-u^{\ast}}^{2}_{\Psi}\\
&+2\inner{v-u,u-u^{\ast}}_{\Psi}+2\inner{u-v^{+},v^{+}-u^{\ast}}_{\Psi}.
\end{aligned}\vspace{-.18cm}\end{equation*}
Since, 
$2\inner{u-v^{+},v^{+}-u^{\ast}}_{\Psi}=2\inner{u-v^{+},v^{+}-u}_{\Psi}
+2\inner{u-v^{+},u-u^{\ast}}_{\Psi}=-2\norm{u-v^{+}}_{\Psi}^{2}+2\inner{u-v^{+},u-u^{\ast}}_{\Psi}.$
This gives 
$\norm{v-u^{\ast}}_{\Psi}^{2}=\norm{v-u}^{2}_{\Psi}-\norm{u-v^{+}}_{\Psi}^{2}+\norm{v^{+}-u^{\ast}}_{\Psi}^{2}+2\inner{u-u^{\ast},v-v^{+}}_{\Psi}. $
By definition of the updates, we have for $\bar{v}\equiv Bv,\bar{u}\equiv Bu,\hat{v}\in Cu$, the identities
$u+\Psi^{-1}\hat{v}=v-\Psi^{-1}\bar{v}$ and $v^{+}=u+\Psi^{-1}(\bar{v}-\bar{u}).$
Furthermore, since $0\in \mc Du^{\ast} +\mc Cu^{\ast} $, there exists $\hat{v}^{\ast}\in \mc Cu^{\ast} $ and $\bar{u}^{\ast}\equiv \mc Du^{\ast} $ such that
$0=\bar{u}^{\ast}+\hat{v}^{\ast}.$
It follows that 
$v-v^{+}=v-u-\Psi^{-1}(\bar{v}-\bar{u})=\Psi^{-1}(\hat{v}+\bar{u}).$
Hence, 
\vspace{-.18cm}\begin{equation*}
\begin{aligned}
\norm{v-u^{\ast}}_{\Psi}^{2}=&\norm{v-u}^{2}_{\Psi}-\norm{u-v^{+}}_{\Psi}^{2}+\\
&+\norm{v^{+}-u^{\ast}}_{\Psi}^{2}+2\inner{u-u^{\ast},\hat{v}+\bar{u}}\\
=&\norm{v-u}^{2}_{\Psi}-\norm{u-v^{+}}_{\Psi}^{2}+\norm{v^{+}-u^{\ast}}_{\Psi}^{2}+\\
&+2\inner{\hat{v}-\hat{v}^{\ast}-\bar{u}^{\ast}+\bar{u},u-u^{\ast},u-u^{\ast}}.
\end{aligned}
\vspace{-.18cm}\end{equation*}
Since $(u,\hat{v}),(u^{\ast},\hat{v}^{\ast})\in\gr(C),(u^{\ast},\bar{u}^{\ast}),(u,\bar{u})\in\gr(B)$, it follows from the monotonicity that 
$\varepsilon:=\inner{\hat{v}-\hat{v}^{\ast}-\bar{u}^{\ast}+\bar{u},u-u^{\ast},u-u^{\ast}}\geq 0.$
Finally, observe that $u-v^{+}=\Psi^{-1}(\mc Du-\mc Dv)$, and that 
\vspace{-.15cm}\begin{equation*}\begin{aligned}
&\norm{\Psi^{-1}(\mc Du-\mc Dv)}_{\Psi}^{2}=\inner{\Psi^{-1}(\mc Du-\mc Dv),\mc Du-\mc Dv}\\
&\leq \lambda_{\max}(\Psi^{-1})\norm{\mc Du-\mc Dv}^{2}\leq L^{2}\lambda_{\max}(\Psi^{-1})\norm{u-v}^{2}\\
&\leq L^{2}\tfrac{\lambda_{\max}(\Psi^{-1})}{\lambda_{\min}(\Psi)}\norm{u-v}^{2}_{\Psi}.
\end{aligned}\vspace{-.15cm}\end{equation*}
Since $\alpha=1/\lambda_{\max}(\Psi^{-1})=\lambda_{\min}(\Psi)$, it follows from the Lipschitz continuity of the operator $B$ that
$\norm{u-v^{+}}_{\Psi}^{2}\leq (L/\alpha)^{2}\norm{u-v}^{2}_{\Psi} $
and the statement is proven.
\end{proof}
\begin{corollary}
If $L/\alpha<1$, the map $T_{\text{FBF}}:\setH\to\setH$ is quasinonexpansive in the Hilbert space $(\setH,\inner{\cdot,\cdot}_{\Psi})$, i.e. 
\vspace{-.15cm}\begin{equation*}
\forall v\in\setH\; \forall u^{\ast}\in\op{fix}(T_{\text{FBF}}) \;\norm{T_{\text{FBF}}v-u^{\ast}}_{\Psi}\leq\norm{v-u^{\ast}}_{\Psi}.
\vspace{-.15cm}\end{equation*}
\end{corollary}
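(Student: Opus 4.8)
The plan is to read the corollary off directly from the Fej\'er-type identity \eqref{eq:Fejer} established in the preceding Proposition, so essentially no new computation is required. First I would fix an arbitrary $v\in\setH$ and $u^{\ast}\in\op{fix}(T_{\text{FBF}})$ and instantiate \eqref{eq:Fejer}, keeping in mind that here $u=J_{\Psi^{-1}\mc C}(v-\Psi^{-1}\mc D v)$ and $v^{+}=T_{\text{FBF}}v$ exactly as in that proof, so that $\norm{T_{\text{FBF}}v-u^{\ast}}_{\Psi}^{2}=\norm{v^{+}-u^{\ast}}_{\Psi}^{2}$ is the left-hand side.

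Next I would argue that both quantities subtracted on the right-hand side of \eqref{eq:Fejer} are nonnegative. The scalar $\varepsilon$ is nonnegative by construction, being the monotonicity pairing shown to satisfy $\varepsilon\geq0$ in the Proposition; and under the hypothesis $L/\alpha<1$ the coefficient $1-(L/\alpha)^{2}$ is strictly positive, so $(1-(L/\alpha)^{2})\norm{u-v}_{\Psi}^{2}\geq0$ as well. Discarding both terms from \eqref{eq:Fejer} then yields $\norm{T_{\text{FBF}}v-u^{\ast}}_{\Psi}^{2}\leq\norm{v-u^{\ast}}_{\Psi}^{2}$.

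Finally, since $\Psi\succ0$ the map $\norm{\cdot}_{\Psi}$ is a genuine norm and the square root is order-preserving, so taking square roots gives $\norm{T_{\text{FBF}}v-u^{\ast}}_{\Psi}\leq\norm{v-u^{\ast}}_{\Psi}$ for all such $v$ and $u^{\ast}$, which is precisely the asserted quasinonexpansiveness. The only genuine difficulty lies upstream, in the identity \eqref{eq:Fejer} itself, namely establishing the sign $\varepsilon\geq0$ from monotonicity of $\mc C$ and $\mc B$ and the Lipschitz bound $\norm{u-v^{+}}_{\Psi}^{2}\leq(L/\alpha)^{2}\norm{u-v}_{\Psi}^{2}$; once that Proposition is in hand, the corollary is immediate and requires no further obstacle.
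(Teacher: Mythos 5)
Your proposal is correct and matches the paper's (implicit) argument exactly: the corollary is read off from the identity \eqref{eq:Fejer} by observing that $\varepsilon\geq 0$ and that $1-(L/\alpha)^{2}>0$ under the hypothesis, so both subtracted terms can be discarded and the square root taken. The paper offers no separate proof precisely because, as you note, all the real work lives in the preceding Proposition.
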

\begin{proposition}
If Assumption \ref{step_FBF} holds, the sequence generated by the FBF algorithm, $(v^{k})_{k\geq 0}$, is bounded in norm, and all its accumulation points are elements in $\mc Z$.
\end{proposition}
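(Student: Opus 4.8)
The plan is to run the standard Fej\'er-monotonicity argument for Tseng's scheme, building entirely on the two preceding propositions and the maximal monotonicity of $\mc C+\mc D$ from Lemma \ref{lemma_op}(iv). First I would note that Assumption \ref{step_FBF} reads $\abs{\Psi^{-1}}<1/L_{\mc D}$, which (since $\alpha=1/\lambda_{\max}(\Psi^{-1})=1/\abs{\Psi^{-1}}$ and $L=L_{\mc D}$) is exactly the condition $L/\alpha<1$ needed for the Corollary. Fixing any $u^{\ast}\in\mc Z=\op{fix}(T_{\text{FBF}})$, which is nonempty by the Lemma identifying $\Zer(\mc A+\mc B+\mc C)$ with the nonempty v-GNE set, the Fej\'er identity \eqref{eq:Fejer} applied at $v=v^{k}$ (so that $T_{\text{FBF}}v^{k}=v^{k+1}$ and $u=u^{k}:=J_{\Psi^{-1}\mc C}(v^{k}-\Psi^{-1}\mc D v^{k})$) shows that $\norm{v^{k}-u^{\ast}}_{\Psi}$ is nonincreasing, since every subtracted term is nonnegative. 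Hence $(v^{k})_{k\geq 0}$ is bounded in $\mc H_{\Psi}$, and therefore in norm.

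Next I would telescope \eqref{eq:Fejer}. Summing over $k$ and using that the left-hand side stays nonnegative yields $\sum_{k\geq 0}\bigl(1-(L/\alpha)^{2}\bigr)\norm{u^{k}-v^{k}}^{2}_{\Psi}\leq\norm{v^{0}-u^{\ast}}^{2}_{\Psi}<\infty$. Because $1-(L/\alpha)^{2}>0$, this forces $\norm{u^{k}-v^{k}}_{\Psi}\to 0$, i.e. the gap between the inner resolvent step and the current iterate vanishes. Since $(v^{k})$ is bounded it has an accumulation point $\bar v$, along a subsequence $v^{k_{j}}\to\bar v$, and the vanishing gap gives $u^{k_{j}}\to\bar v$ as well.

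To identify $\bar v$ as a solution, I would unfold the resolvent. From $u^{k}=J_{\Psi^{-1}\mc C}(v^{k}-\Psi^{-1}\mc D v^{k})$ one gets $\Psi(v^{k}-u^{k})-\mc D v^{k}\in\mc C u^{k}$, and adding the single-valued $\mc D u^{k}$ to both sides gives the graph membership
\begin{equation*}
\Psi(v^{k}-u^{k})-\mc D v^{k}+\mc D u^{k}\in(\mc C+\mc D)u^{k}.
\end{equation*}
Along $k=k_{j}$, the left-hand side tends to $0$: the first term vanishes because $v^{k_{j}}-u^{k_{j}}\to 0$, and $-\mc D v^{k_{j}}+\mc D u^{k_{j}}\to 0$ by the Lipschitz continuity of $\mc D$ together with $\norm{u^{k_{j}}-v^{k_{j}}}\to 0$. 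Thus the pairs $(u^{k_{j}},\,\Psi(v^{k_{j}}-u^{k_{j}})-\mc D v^{k_{j}}+\mc D u^{k_{j}})\in\gr(\mc C+\mc D)$ converge to $(\bar v,0)$. Invoking that $\mc C+\mc D$ is maximally monotone (Lemma \ref{lemma_op}(iv)), hence has a sequentially closed graph in the finite-dimensional setting, I conclude $0\in(\mc C+\mc D)\bar v$, i.e. $\bar v\in\Zer(\mc A+\mc B+\mc C)=\mc Z$.

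The routine parts are the telescoping and the boundedness. The step I expect to require the most care is the graph-limit argument: one must pick the correct element of $\gr(\mc C+\mc D)$ from the resolvent identity and verify that its second coordinate genuinely tends to $0$, for which the Lipschitz continuity of $\mc D$ (not of $\mc C$, which is only set-valued) and the summability-derived fact $\norm{u^{k}-v^{k}}\to 0$ are both essential before the closedness of the maximal monotone graph can be applied.
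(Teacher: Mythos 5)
Your proposal is correct and follows essentially the same route as the paper: Fej\'er monotonicity of $(v^{k})$ from \eqref{eq:Fejer} gives boundedness, telescoping gives $\norm{u^{k}-v^{k}}\to 0$, and the resolvent identity yields exactly the paper's element $w^{k}=\Psi(v^{k}-u^{k})+\mc Du^{k}-\mc Dv^{k}\in(\mc C+\mc D)u^{k}$, which tends to $0$ so that the closed graph of the maximally monotone operator $\mc C+\mc D$ (Lemma \ref{lemma_op}(iv)) identifies the limit as a zero. The only difference is that you spell out the telescoping step that the paper leaves implicit.
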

\begin{proof}
Form \eqref{eq:Fejer} we deduce that $(v^{k})_{k\geq 0}$ is Fej\'{e}r monotone with respect to $\op{fix}(T_{\text{FBF}})=\mc Z$. Therefore, it is bounded norm. It remains to show that all accumulation points are in $\mc Z$. By an obvious abuse of notation, let $(v^{k})_{k\geq 0}$ denote a converging subsequence with limit $u^{\ast}$. From \eqref{eq:Fejer} it follows $\norm{u^{k}-v^{k}}_{\Psi}\to 0$, and hence $\norm{u^{k}-v^{k}}\to 0$ as $k\to\infty$. By continuity, it therefore follows as well $\norm{\mc Du^k-\mc Dv^k}\to 0$ as $k\to\infty$. Since $u^{k}=J_{\Psi^{-1}\mc C}(v^{k}-\Psi^{-1}\mc Dv^{k})$, it follows that $w^{k}:=\Psi(v^{k}-u^{k})+\mc Du^k-\mc Dv^k\in \mc Du^k+\mc Cu^{k}.$
Since $w^{k}\to 0$ and the operator $\mc C+\mc D$ is maximally monotone by Lemma \ref{lemma_op} and has a closed graph \cite[Lem. 3.2]{tseng2000}, we conclude $0\in \mc Du^{\ast} +\mc Cu^{\ast} $. Hence, $u^{\ast}\in\mc Z$.
\end{proof}

\subsection{Convergence of the forward-backward-half-forward}
\label{sec:FBHF}
We here provide the convergence proof for the FBHF.
\begin{proposition}
If Assumption \ref{step_FBHF} holds, the sequence generated by the FBHF algorithm converges to $\mc Z$.
\end{proposition}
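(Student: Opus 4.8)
The plan is to mirror the three-step argument of Appendix~\ref{app:FBF}, but to exploit the finer splitting $\mc C+\mc A+\mc B$ in which the cocoercive part $\mc A$ is treated by a single forward evaluation and only the Lipschitz part $\mc B$ is evaluated twice. First I would recognize that the present setting is an instance of the FBHF template of \cite{briceno2018}: by Lemma~\ref{lemma_coco} the operator $\Psi^{-1}\mc A$ is $\alpha\theta_{\mc A}$-cocoercive in $\mc H_\Psi$, by Lemma~\ref{lemma_op} together with Lemma~\ref{lemma_mono} the operator $\Psi^{-1}\mc B$ is monotone and $L_{\mc B}$-Lipschitz, and $\Psi^{-1}\mc C$ is maximally monotone. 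Thus the main task is to verify that the two-sided bound in Assumption~\ref{step_FBHF} is exactly the step-size condition under which \cite[Th.~2.3]{briceno2018} applies.

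As a first step, I would characterize the fixed points as $\op{fix}(T_{\text{FBHF}})=\mc Z$, in analogy with the first proposition of Appendix~\ref{app:FBF}. For $u^{\ast}\in\mc Z$ one has $-\mc Du^{\ast}\in\mc Cu^{\ast}$, hence $u^{\ast}=\op{J}_{\Psi^{-1}\mc C}(u^{\ast}-\Psi^{-1}\mc Du^{\ast})$; applying $\Id-\Psi^{-1}\mc B$ and then adding $\Psi^{-1}\mc Bu^{\ast}$ gives $u^{\ast}=T_{\text{FBHF}}u^{\ast}$. Conversely, for a fixed point the same algebra, combined with the $L_{\mc B}$-Lipschitz contraction factor $\abs{\Psi^{-1}}L_{\mc B}$ supplied by Assumption~\ref{step_FBHF}, forces the resolvent residual $u^{\ast}-\op{J}_{\Psi^{-1}\mc C}(u^{\ast}-\Psi^{-1}\mc Du^{\ast})$ to vanish, so that $u^{\ast}\in\mc Z$.

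The core of the proof, and the step I expect to be the main obstacle, is the energy inequality. I would expand $\norm{\bs v^{k+1}-u^{\ast}}_\Psi^2$ using $\bs v^{k+1}=\bs u^k+\Psi^{-1}(\mc B\bs v^k-\mc B\bs u^k)$ and $\bs u^k=\op{J}_{\Psi^{-1}\mc C}(\bs v^k-\Psi^{-1}\mc D\bs v^k)$. The difference with respect to the FBF computation is that the contribution of $\mc A$ is not cancelled by a second evaluation but is controlled by cocoercivity: the relevant cross term is bounded below via $\inner{\mc A\bs v^k-\mc Au^{\ast},\bs v^k-u^{\ast}}\geq\theta_{\mc A}\norm{\mc A\bs v^k-\mc Au^{\ast}}^2$, which is precisely what permits keeping a single forward step in $\mc A$. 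Combining this with the Tseng-type estimate $\norm{\Psi^{-1}(\mc B\bs v^k-\mc B\bs u^k)}_\Psi^2\leq(\abs{\Psi^{-1}}L_{\mc B})^2\norm{\bs v^k-\bs u^k}_\Psi^2$ and discarding a nonnegative inner product coming from the monotonicity of $\mc C$, one arrives at an inequality of the form
\[
\norm{\bs v^{k+1}-u^{\ast}}_\Psi^2\leq\norm{\bs v^k-u^{\ast}}_\Psi^2-\big(1-(\abs{\Psi^{-1}}L_{\mc B})^2\big)\norm{\bs v^k-\bs u^k}_\Psi^2-\big(2\theta_{\mc A}-\abs{\Psi^{-1}}\big)c_k,
\]
where $c_k\geq 0$ collects the cocoercivity surplus. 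The two nonnegativity requirements $1-(\abs{\Psi^{-1}}L_{\mc B})^2\geq 0$ and $2\theta_{\mc A}-\abs{\Psi^{-1}}\geq 0$ are exactly the two conditions bundled in Assumption~\ref{step_FBHF}; producing these two simultaneous descent terms with the correct constants is the delicate bookkeeping of the argument.

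Finally, the energy inequality shows that $(\bs v^k)$ is Fej\'er monotone with respect to $\mc Z=\op{fix}(T_{\text{FBHF}})$, hence bounded, and that $\norm{\bs v^k-\bs u^k}_\Psi\to 0$. As in the last proposition of Appendix~\ref{app:FBF}, any accumulation point $u^{\ast}$ of $(\bs v^k)$ satisfies $w^k:=\Psi(\bs v^k-\bs u^k)+\mc D\bs u^k-\mc D\bs v^k\in(\mc A+\mc B+\mc C)\bs u^k$ with $w^k\to 0$ by the Lipschitz continuity of $\mc D$; since $\mc A+\mc B+\mc C$ is maximally monotone by Lemma~\ref{lemma_op} and therefore has closed graph, it follows that $0\in(\mc A+\mc B+\mc C)u^{\ast}$, i.e. $u^{\ast}\in\mc Z$. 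A standard Fej\'er-monotonicity argument then upgrades this subsequential convergence to convergence of the whole sequence, and the primal block of the limit is a v-GNE of the game in \eqref{game}.
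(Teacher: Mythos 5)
Your proposal follows essentially the same route as the paper: derive a Fej\'er-type energy inequality for $T_{\text{FBHF}}$ in $\mc H_\Psi$ by treating the $\mc A$-contribution via cocoercivity of $\Psi^{-1}\mc A$ (Lemma~\ref{lemma_coco}) and the $\mc B$-contribution via the Tseng-type Lipschitz estimate, so that the two descent coefficients are nonnegative exactly under Assumption~\ref{step_FBHF}, and then conclude via \cite[Thm.~2.3]{briceno2018}. The only cosmetic difference is that you spell out the fixed-point characterization and the closing demiclosedness/Fej\'er argument, which the paper delegates to the cited theorem, and you suppress the Young-inequality parameter $\varepsilon$ that the paper uses to balance the two descent terms; the substance is the same.
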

Since, $w-u\in\Psi^{-1}\mc C u$, it follows that $(u,w-u)\in\gr(\Psi^{-1}\mc C)$. Additionally, $0\in \mc Du^{\ast} +\mc Cu^{\ast} $, implying that $(u^{\ast},-\Psi^{-1}\mc Du^{\ast})\in \gr(\Psi^{-1}\mc C)$. Monotonicity of the involved operators, implies that 
$\inner{u-u^{\ast},w-u-\Psi^{-1}\mc Du^{\ast}}_{\Psi}\leq 0,\text{ and }
\inner{u-u^{\ast},\Psi^{-1}(\mc Bu^{\ast} -\mc Bu)}_{\Psi}\leq 0, $
Using these two inequalities, we see 
\vspace{-.15cm}\begin{equation*}\begin{aligned}
&\inner{u-u^{\ast},u-w-\Psi^{-1}\mc Bu}_{\Psi}=\inner{u-u^{\ast},\Psi^{-1}\mc Au^{\ast}}_{\Psi}\\
&+\inner{u-u^{\ast},u-w-\Psi^{-1}\mc Du^{\ast}}_{\Psi}\\
&+\inner{u-u^{\ast},\Psi^{-1}(\mc Du^{\ast} -\mc Bu)}_{\Psi}\leq \inner{u-u^{\ast},\Psi^{-1}\mc Au^{\ast}}_{\Psi}
\end{aligned}\vspace{-.15cm}\end{equation*}
Therefore, 
\vspace{-.15cm}\begin{equation}\label{step}
\begin{aligned}
&2\inner{u-u^{\ast},\Psi^{-1}(\mc Bv-\mc Bu)}_{\Psi}=\\
&2\inner{u-u^{\ast},\Psi^{-1}\mc Bv+w-u}_{\Psi}+2\inner{u-u^{\ast},u-w-\Psi^{-1}\mc Bu}_{\Psi}\\
&\leq 2\inner{u-u^{\ast},\Psi^{-1}\mc Bv+w-u}_{\Psi}+2\inner{u-u^{\ast},\Psi^{-1}\mc Au^{\ast}}_{\Psi}\\
&=2\inner{u-u^{\ast},\Psi^{-1}\mc Dv+w-u}_{\Psi}\\
&+2\inner{u-u^{\ast},\Psi^{-1}(\mc Au^{\ast}-\mc Av)}_{\Psi}\\
&=2\inner{u-u^{\ast},v-u}_{\Psi}+2\inner{u-u^{\ast},\Psi^{-1}(\mc Au^{\ast}-\mc Av)}_{\Psi},
\end{aligned}
\vspace{-.15cm}\end{equation}
where in the last equality we have used the identity $w=v-\Psi^{-1}\mc Dv$. Using the cosine formula, 
\eqref{step} becomes 
\vspace{-.15cm}\begin{equation}
\begin{aligned}\label{eq:step}
&2\inner{u-u^{\ast},\Psi^{-1}(\mc Bv-\mc Bu)}_{\Psi}\leq \norm{v-u^{\ast}}_{\Psi}^{2}-\norm{u-u^{\ast}}_{\Psi}^{2}\\
&-\norm{v-u}_{\Psi}^{2}+2\inner{u-u^{\ast},\Psi^{-1}(\mc Au^{\ast}-\mc Av)}_{\Psi}.
\end{aligned}
\vspace{-.15cm}\end{equation}
The cocoercivity of $\Psi^{-1}\mc A$ in $(\setH,\inner{\cdot,\cdot}_{\Psi})$ gives for all $\varepsilon>0$
\vspace{-.15cm}\begin{equation*}\begin{aligned}
&2\inner{u-u^{\ast},\Psi^{-1}(\mc Au^{\ast}-\mc Av)}_{\Psi}=\\
&2\inner{v-u^{\ast},\Psi^{-1}(\mc Au^{\ast}-\mc Av)}_{\Psi}+2\inner{u-v,\Psi^{-1}(\mc Au^{\ast}-\mc Av)}_{\Psi}\\
&\leq -2\alpha\theta\norm{\Psi^{-1}(\mc Au^{\ast}-\mc Av)}_{\Psi}^{2}+2\inner{u-v,\Psi^{-1}(\mc Au^{\ast}-\mc Av)}_{\Psi}\\
&=-2\alpha\theta\norm{\Psi^{-1}(\mc Au^{\ast}-\mc Av)}_{\Psi}^{2}+\tfrac{1}{\varepsilon}\norm{\Psi^{-1}(\mc Av-\mc Au^{\ast})}_{\Psi}^{2}\\
&+\varepsilon\norm{v-u}^{2}_{\Psi}-\varepsilon\norm{v-u-\tfrac{1}{\varepsilon}\Psi^{-1}(\mc Av-\mc Au^{\ast})}^{2}_{\Psi}\\
&=\varepsilon\norm{v-u}^{2}_{\Psi}-\left(2\alpha\theta-\tfrac{1}{\varepsilon}\right)\norm{\Psi^{-1}(\mc Av-\mc Au^{\ast})}_{\Psi}^{2}\\
&-\varepsilon\norm{v-u-\tfrac{1}{\varepsilon}\Psi^{-1}(\mc Av-\mc Au^{\ast})}^{2}_{\Psi}.
\end{aligned}\vspace{-.15cm}\end{equation*}
Combining this estimate with (\ref{eq:step}), we see 
\vspace{-.15cm}\begin{equation*}\begin{aligned}
&2\inner{u-u^{\ast},\Psi^{-1}(\mc Bv-\mc Bu)}_{\Psi}\leq \norm{v-u^{\ast}}_{\Psi}^{2}-\norm{u-u^{\ast}}_{\Psi}^{2}\\
&-\norm{v-u}_{\Psi}^{2}+\varepsilon\norm{v-u}^{2}_{\Psi}-\left(2\alpha\theta-\tfrac{1}{\varepsilon}\right)\norm{\Psi^{-1}(\mc Av-\mc Au^{\ast})}_{\Psi}^{2}\\
&-\varepsilon\norm{v-u-\tfrac{1}{\varepsilon}\Psi^{-1}(\mc Av-\mc Au^{\ast})}^{2}_{\Psi}.
\end{aligned}\vspace{-.15cm}\end{equation*}
Therefore, 
\vspace{-.15cm}\begin{equation*}\begin{aligned}
&\norm{v^{+}-u^{\ast}}^{2}_{\Psi}
=\norm{u+\Psi^{-1}(\mc Bv-\mc Bu)-u^{\ast}}^{2}_{\Psi}\\
&=\norm{u-u^{\ast}}^{2}_{\Psi}+2\inner{u-u^{\ast},\Psi^{-1}(\mc Bv-\mc Bu)}_{\Psi}\\
&+\norm{\Psi^{-1}(\mc Bv-\mc Bu)}_{\Psi}^{2}\\
&\leq \norm{u-u^{\ast}}^{2}_{\Psi}+\norm{\Psi^{-1}(\mc Bv-\mc Bu)}_{\Psi}^{2}-\norm{u-u^{\ast}}_{\Psi}^{2}\\
&-\hspace{-.1cm}\left(2\alpha\theta-\tfrac{1}{\varepsilon}\right)\norm{\Psi^{-1}(\mc Av-\mc Au^{\ast})}_{\Psi}^{2}+\norm{v-u^{\ast}}_{\Psi}^{2}\hspace{-.1cm}-\hspace{-.1cm}\norm{v-u}_{\Psi}^{2}\\
&+\varepsilon\norm{v-u}^{2}_{\Psi}-\varepsilon\norm{v-u-\tfrac{1}{\varepsilon}\Psi^{-1}(\mc Av-\mc Au^{\ast})}^{2}_{\Psi}.
\end{aligned}\vspace{-.15cm}\end{equation*}
Since, 
$\norm{\Psi^{-1}(\mc Bv-\mc Bu)}^{2}_{\Psi}\leq (L/\alpha)^{2}\norm{v-u}^{2}_{\Psi},$
the above reads as
\vspace{-.15cm}\begin{equation*}
\begin{aligned}
\norm{T_{\text{FBHF}}v-u^{\ast}}_{\Psi}^{2}\leq& \norm{v-u^{\ast}}^{2}_{\Psi}-L^{2}\left(\tfrac{1-\varepsilon}{L^{2}}-\tfrac{1}{\alpha^{2}}\right)\norm{v-u}^{2}_{\Psi}\\
&-\tfrac{1}{\alpha\varepsilon}\left(2\theta\varepsilon-\tfrac{1}{\alpha}\right)\norm{\Psi^{-1}(\mc Av-\mc Au^{\ast})}_{\Psi}^{2}\\
&-\varepsilon\norm{v-u-\tfrac{1}{\varepsilon}\Psi^{-1}(\mc Av-\mc Au^{\ast})}^{2}_{\Psi}.
\end{aligned}
\vspace{-.15cm}\end{equation*}
In order to choose the largest interval for $1/\alpha$ ensuring that the second and third terms are negative, we set 
$\chi\leq\min\{2\theta,1/L\}$.
Then,
\vspace{-.15cm}\begin{equation*}\begin{aligned}
\norm{T_{\text{FBHF}}v-u^{\ast}}_{\Psi}^{2}\leq& \norm{v-u^{\ast}}^{2}_{\Psi}-L^{2}\left(\chi^{2}-\tfrac{1}{\alpha^{2}}\right)\norm{v-u}^{2}_{\Psi}\\
&-\tfrac{2\theta}{\alpha\chi}\left(\chi-\tfrac{1}{\alpha}\right)\norm{\Psi^{-1}(\mc Av-\mc Au^{\ast})}_{\Psi}^{2}\\
&-\tfrac{\chi}{2\theta}\norm{v-u-\tfrac{2\theta}{\chi}(\Psi^{-1}(\mc Av-\mc Au^{\ast}))}^{2}_{\Psi}.
\end{aligned}\vspace{-.15cm}\end{equation*}
From here, we obtain convergence of the sequence $(v^{k})_{k\geq 0}$ as a consequence of \cite[Thm. 2.3]{briceno2018} for $1/\alpha\in(0,\chi)$.

\bibliographystyle{IEEEtran}
\bibliography{IEEEabrv,Biblio,mybib}

\end{document}